\patchcmd{\@settitle}{\uppercasenonmath\@title}{\scshape\large}{}{}
\patchcmd{\@setauthors}{\MakeUppercase}{\scshape\normalsize}{}{}
\newtheorem{defi}{Definition}
\newtheorem{prop}{Proposition}
\newtheorem{thm}{Theorem}
\newtheorem{lem}[thm]{Lemma}
\newcommand{\st}{\text{s.t.}}
\newcommand{\define}{\mathrel{{\mathop:}{=}}}
\newenvironment{varsubequations}[1]
{%
  \addtocounter{equation}{-1}%
  \begin{subequations}
    \def\@currentlabel{#1}%

  }
  {%
  \end{subequations}\ignorespacesafterend
}
\newcommand\MyBox[2]{
  \fbox{\lower0.75cm
    \vbox to 1.7cm{\vfil
      \hbox to 1.7cm{\hfil\parbox{1.4cm}{#1\\#2}\hfil}
      \vfil}%
  }%
}
\newcommand{\defset}[3][\defsep]{\set{#2#1#3}}
\newcommand{\set}[1]{\{#1\}}
\newcommand{\Set}[1]{\left\{#1\right\}}
\newcommand{\field}{\mathbb}
\newcommand{\naturals}{\field{N}}
\newcommand{\reals}{\field{R}}
\newcommand{\N}{\naturals}
\newcommand{\R}{\reals}
\newcommand{\AC}{\text{AC}}
\newcommand{\TP}{\text{TP}}
\newcommand{\TN}{\text{TN}}
\newcommand{\FP}{\text{FP}}
\newcommand{\FN}{\text{FN}}
\newcommand{\PR}{\text{PR}}
\newcommand{\RE}{\text{RE}}
\newcommand{\OCTH}{\text{OCT-H}}
\newcommand{\LE}{\text{LE}}
\newcommand{\MCC}{\text{MCC}}
\newcommand{\SOCT}{\text{S$^2$OCT}}
\newcommand{\codename}[1]{\textsf{#1}}
\newcommand{\rev}[1]{#1}
\newcommand{\revTwo}[1]{\textcolor{black}{#1}}
\begin{document}

\title[MILP for Semi-Supervised Optimal Classification Trees]%
{Mixed-Integer Linear Optimization\\
  for Semi-Supervised Optimal Classification Trees}

\author[J. P. Burgard, M. E. Pinheiro, M. Schmidt]%
{Jan Pablo Burgard, Maria Eduarda Pinheiro, Martin Schmidt}

\address[J. P. Burgard]{%
  Trier University,
  Department of Economic and Social Statistics,
  Universitätsring 15,
  54296 Trier,
  Germany}
\email{burgardj@uni-trier.de}

\address[M. E. Pinheiro, M. Schmidt]{%
  Trier University,
  Department of Mathematics,
  Universitätsring 15,
  54296 Trier,
  Germany}
\email{pinheiro@uni-trier.de}
\email{martin.schmidt@uni-trier.de}

\date{\today}

\begin{abstract}
  Decision trees are one of the most \rev{popular} methods for solving
classification problems, mainly because of their good
interpretability properties.
Moreover, due to advances in recent years in mixed-integer
optimization, several models have been proposed to formulate the
problem of computing optimal classification trees.
The goal is, given a set of labeled points, to split the feature space
with hyperplanes and assign a class to each \rev{part of the resulting
  partition}.
In certain scenarios, however, labels are \rev{only available} for a
subset of the given points.
Additionally, this subset may be non-representative, such as in the
case of self-selection in a survey.
Semi-supervised decision trees tackle the setting of labeled and
unlabeled data and often contribute to enhancing the reliability of
the results.
Furthermore, undisclosed sources may provide extra information about
the size of the classes.
We propose a mixed-integer linear optimization model for computing
semi-supervised optimal classification trees that cover
the setting of labeled and unlabeled data points as well as the
overall number of points in each class for a binary classification.
Our numerical results show that our approach leads to a better
accuracy and a better Matthews correlation coefficient for biased
samples compared to other optimal classification trees, even if only
few labeled points are available.


\end{abstract}

\keywords{Semi-supervised learning,
Optimal classification trees,
Mixed-integer linear optimization%
%
%
}
\subjclass[2020]{90C11,
90C90,
90-08,
68T99%
%
%

\maketitle

\section{Introduction}
\label{sec:introduction}

Decision trees are among the most popular approaches for supervised
classification \parencite{CART,Dtquilann}.
One of the main reasons for this is that they are easy to interpret
compared to other machine-learning models.
The core idea is to recursively partition the feature space, according
to branching rules, and assign a label to each \rev{part of the
  resulting partition}.

One way to partition the data is to use hyperplanes involving a
single feature, which leads to so-called univariate trees; see,
e.g., \textcite{univariateparallel, univariatehybrid}.
In a multivariate tree, these hyperplanes involve more than one
feature, and some approaches for this setting are given in
\textcite{multi1,bennett1996optimal,multi3}.
In many algorithms for univariate or multivariate trees, each separate
hyperplane is generated by minimizing a local impurity function, i.e.,
they do not build the tree by solving just a single optimization problem.

In recent years, due to the advancement of algorithms for
mixed-integer programming (MIP), many strategies for computing optimal
classification trees (OCT) by globally solving an optimization problem
using MIP techniques have been proposed.
Some techniques are discussed in the recent surveys by
\textcite{survey1,survey2}.
The first approaches were proposed by \textcite{Bertsimas2017}.
They present two mixed-integer linear programming (MILP) models based
on univariate and multivariate trees.
\textcite{10.1609/aaai.v33i01.33011624} propose a binary linear
formulation in which the problem size is largely independent of the
size of the training data.
\rev{Other recent MIP approaches for OCTs have been proposed by
  \textcite{Liu-et-al:2024,Aghaei-et-al:2025,Ales-et-al:2024}.}
MIP approaches that consider support vector machines
\parencite{cortes1995support} to split the tree also have been
explored, as can be seen in
\textcite{SVMDT2,maximumdist,donofrio2023margin}.

Besides the MIP models to solve an OCT, \textcite{BLANQUERO2021105281}
proposed a nonlinear optimization approach to compute an optimal
``randomized'' classification tree.
In their approach, each data point is assigned to a class only with a
given probability.
The model uses \rev{multivariate} cuts with the goal of utilizing fewer predictor
variables in the splits of the tree. \rev{This builds upon the work of
  \textcite{BLANQUERO2020255}, which introduced the use of oblique
  cuts and focused on achieving both local and global sparsity. By
  employing regularization with polyhedral norms, this approach
  specifically aims to utilize fewer predictor variables in the splits
  of the tree, a feature that is also integrated into their subsequent
  randomized model.}

All the strategies presented so far exclusively focus on labeled
data.
However, acquiring labels for every unit of interest can be
expensive---in particular if \rev{face-to-face} surveys are used to
obtain the labels. \rev{These surveys rise in cost more or less
  linearly with the number of surveyed units. Hence, if the
  non-labeled data is available,} it would be beneficial to train the
decision tree on only partly labeled data.
This yields a semi-supervised learning
setting \parencite{semisupervised}. \rev{The most common approach to
  tackle the semi-supervised learning problem is to learn the
  structure of the feature space. As the features are taken to be
  available for the labeled and unlabeled units, their structure can
  be estimated in a more stable way; see, e.g., \textcite{belkin2006manifold,
    kemp2003semi}. \textcite{zhu2003semi} extend this idea, by using
  propagation methods, to fill up data-gaps in the feature space, and
  \textcite{blanco2023multiclass} show how to use the pre-learned
  structure of the feature space for multiclass classification.}
Algorithms for semi-supervised learning have already been proposed for
\rev{support vector machines (SVMs)}
\rev{\parencite{demiriz2000optimization,Chapelle,LAPSVM2}}, neural
networks \parencite{SemiSupNN,NeuralSemiSUpervised,NeuralSEMISUP}, and
logistic regression \parencite{LRSemi,SemiSupRL}.

In the field of semi-supervised decision trees,
\textcite{Semisupervised3} splits internal nodes by
utilizing the structural characteristics of the data for subspace
partition.
Moreover, \textcite{Semisupervised2} consider minimizing a local impurity
function and \textcite{SemisupervisedSF} propose self-training as base
learners.
Recently, \textcite{SemiSuptogrowing} consider a maximum-mean
discrepancy to  estimate the class ratio at every splitting rule
in a univariate decision tree.
Furthermore, \textcite{SemiSupervised4} present a graph Laplacian
approach to deal with unlabeled data.
Hence, although they present different ideas, none of the approaches
considers globally solving a single optimization problem.

Moreover, in many cases, external sources provide information
about the total amount of elements in each class within a
population.
For example, in some businesses, the number of positive labels might
be available, but the identification of which customer has a positive
or negative label is unknown.
An intuitive example is a supermarket for which the amount of cash
payments is known.
However, this information is not attributable to the individual
customers ex-post.
Another example is population surveys, where statistics agencies can
provide how many people are employed.
\rev{In the context of bank lending, the bank can also determine how
  many people will receive the loan and how many will not, providing
  cardinality information. This cardinality information can also be
  obtained from the class distribution of the sample, as it is done in
  \textcite{BURGARD2026107337} for health data generation.}
\rev{Although the available data may be provided by external sources,
  it can come from non-probability samples in which the data
  collection mechanism is largely unknown, creating the risk of a
  biased sample. In such cases, having additional information on the
  total number of points in each class can be particularly important
  because standard methods can struggle to correct for sampling bias.}

For logistic regression, the idea of aggregating this extra
information is proposed by \textcite{Burgard2021CSDA}, who develop
a cardinality-constrained multinomial logit model.
In the SVM setting, \textcite{constrainedSVM} present a mixed-integer
quadratic optimization model and iterative clustering techniques to
tackle cardinality constraints for each class.
Our contribution \rev{in this paper} is to propose to add this
aggregated additional information to a multivariate OCT model by
imposing a cardinality constraint on the predicted labels for the
unlabeled data.
\rev{To this end, we consider a special case of the broad field of
  semi-supervised learning that is mainly focusing on the given
  cardinality information.}

We develop a big-$M$-based MILP to solve the semi-supervised optimal
classification tree (S$^2$OCT) problem that deals with the cardinality
constraint for the unlabeled data.
The cardinality constraint helps to account for biased samples since
the number of predictions in each class on the population is bounded
by the constraint.
This paper is organized as follows.
In Section~\ref{sec:chapter-1} we present preliminary concepts and our
optimization problem.
Afterward, the big-$M$-based MILP formulation is presented in
Section~\ref{sec:chapter-2}.
In Section~\ref{sec:numerical-results}, numerical results are reported
and discussed and we conclude in Section~\ref{sec:conclusion}.


\section{Preliminary Concepts}
\label{sec:chapter-1}

Let $\rev{X = [X_l, X_u]} \in \R^{p \times N}$ be the data matrix with
labeled data $X_l = [x^1, \dotsc, x^{n}]$ and unlabeled data $X_u =
[x^{n+1}, \dotsc, x^N]$.
Hence, we observe $x^i \in \mathbb{R}^p$ for all $i \in [1,N]
\define \set{1,\dots, N}$.
We set $m \define N -n $, $\mathcal{U}\define [n+1,\dots, N]$, and
$c \in \set{\mathcal{A},\mathcal{B}}^{n}$ as the vector of class
labels for the labeled data.

In a multivariate optimal classification tree, each decision consists
of a linear combination of the $p$ components of a point $x^i$.
Considering only the labeled data, the goal is to split the feature
space into distinct regions to correctly classify each point.
However, in many applications, aggregated information on the labels
is available, e.g., from census data.
\rev{In the following, we know the total number~$\lambda \in \N$ of
  unlabeled points that belong to the class $\mathcal{A}$.
  Based on that, we adapt the idea of optimal classification trees such
  that we can use the unlabeled data as well as $\lambda$ as additional
  information.}

Given a depth $D \in \mathbb{N}$, a classification tree has
$2^{D+1}-1$ nodes, categorized in two types; the branch nodes
$\mathcal{T}_B = [1, 2^D - 1]$ and the leaf nodes $\mathcal{T}_L =
[2^D, 2^{D+1}-1]$.
Each branch node $b \in \mathcal{T}_B$ provides a hyperplane
parameterized by $(\omega^b, \gamma_b)$ that splits the feature
space into half-spaces.
As suggested by \textcite{Bertsimas2017},  if a point $x^i$, $i \in
[1,N]$, satisfies $(\omega^b)^\top x^i - \gamma_b \leq 0$,  then
$x^i$ follows the left branch of the node $b$.
If $(\omega^b)^\top x^i - \gamma_b > 0$ holds, the point $x^i$ follows
the right branch of the node $b$.
For an optimization formulation, the strict inequality needs
to be re-written as $(\omega^b)^\top x^i - \gamma_b \geq \varepsilon$
for a sufficiently small $\varepsilon > 0$.
However, a very small value of $\varepsilon$ might lead to numerical
instabilities.
To avoid this, we replace $(\omega^b)^\top x^i - \gamma_b \leq 0$ by
$(\omega^b)^\top x^i - \gamma_b \leq -1$, and $(\omega^b)^\top x^i -
\gamma_b \geq \varepsilon$ by $(\omega^b)^\top x^i - \gamma_b \geq
1$.
\revTwo{Note that the latter is always possible if no data point actually
  lies on a hyperplane, which we will later penalize; see
  Definition~\ref{def:leaf-error} and the objective function
  in~\eqref{eq:minimumerrorob}.}

In each leaf node, $t \in \mathcal{T}_L$, all points $x^i$, $i \in
[1,N]$, are classified as $\mathcal{A}$ or $\mathcal{B}$.
In a simple example with $D=2$, the classification tree has
$\mathcal{T}_B = [1,3]$ and $\mathcal{T}_L = [4,7]$; see
Figure~\ref{fig:DecisionTree}.
Regarding the classification at the leaf nodes, let
$\mathcal{T}_L^{\mathcal{A}} = \defset{t \in \mathcal{T}_L}{t \text{
    is even}}$ be the set of leaf nodes that are classified as
$\mathcal{A}$ and $\mathcal{T}_L^{\mathcal{B}}
= \defset{t \in \mathcal{T}_L}{t \text{ is odd}}$ be the set of leaf
nodes
that are classified as $\mathcal{B}$.
For the classification tree with $D=2$, see
Figure~\ref{fig:DecisionTree} again,
$\mathcal{T}_L^{\mathcal{A}} = \set{4,6}$ and
$\mathcal{T}_L^{\mathcal{B}} = \set{5,7}$ holds.
\begin{figure}
  \centering
  \begin{forest}
  for tree={
    draw, 
    edge={->}, 
    l sep=25pt, 
    s sep=50pt 
  },
  decision/.style={
    circle, 
    inner sep=7pt, 
    minimum size=13pt 
  },
  result/.style={
    rectangle, 
    inner sep=10pt, 
    minimum size=13pt 
  }
  [1, decision
  [2, decision, edge label={node[midway, anchor=east] {\footnotesize  {  $(\omega^1)^\top x^i - \gamma_1 \leq -1 \;$} }}
  [4, result, edge label={node[midway, left] {\footnotesize {  $(\omega^2)^\top x^i - \gamma_2 \leq -1$}}},  label={ below: $\mathcal{A}$}]
  [5, result, edge label={node[midway, right] {\footnotesize {  $ \geq 1$}}}, label={ below: $\mathcal{B}$}]
  ]
  [3, decision, edge label={node[midway, right] {\footnotesize $\; (\omega^1)^\top x^i - \gamma_1 \geq 1$}}
  [6, result, edge label={node[midway, left] {\footnotesize {  $(\omega^3)^\top x^i - \gamma_3 \leq -1$}}}, label={ below: $\mathcal{A}$}]
  [7, result, edge label={node[midway, right] {\footnotesize {  $ \geq 1$}}}, label={ below: $\mathcal{B}$}]
  ]
  ]
\end{forest}%
%
%
  \caption{A classification tree with depth~$D=2$}
  \label{fig:DecisionTree}
\end{figure}

For each leaf node $t \in \mathcal{T}_L$, define $\mathcal{N}_R(t)$ as
the index set of the branch nodes in which the right or ``greater
than'' branch is traversed to reach leaf $t$.
Moreover, we define $\mathcal{N}_L(t)$ as the index set of the branch
nodes in which the left or ``less than'' branch is traversed to reach
leaf $t$.
For the classification tree in Figure~\ref{fig:DecisionTree} we have
\begin{align*}
  \mathcal{N}_L(4) = \{1,2\},
  \quad  \mathcal{N}_R(4) = \emptyset,
  & \quad  \mathcal{N}_L(5) = \{1\},
    \quad  \mathcal{N}_R(5) = \{2\},
  \\
  \mathcal{N}_L(6) = \{3\},
  \quad  \mathcal{N}_R(6) = \{1\},
  & \quad \mathcal{N}_L(7) = \emptyset,
    \quad \mathcal{N}_R(7) = \{1,3\}.
\end{align*}
\rev{Having introduced the tree structure, we can now examine its
  geometric interpretation. Figure~\ref{fig:draws} shows a
  2-dimensional dataset and the partition created by the tree. Each
  split divides the feature space into regions, illustrating how the
  tree forms decision boundaries in the input space.}
\begin{figure}
  \centering
  \includegraphics[width=0.75\textwidth]{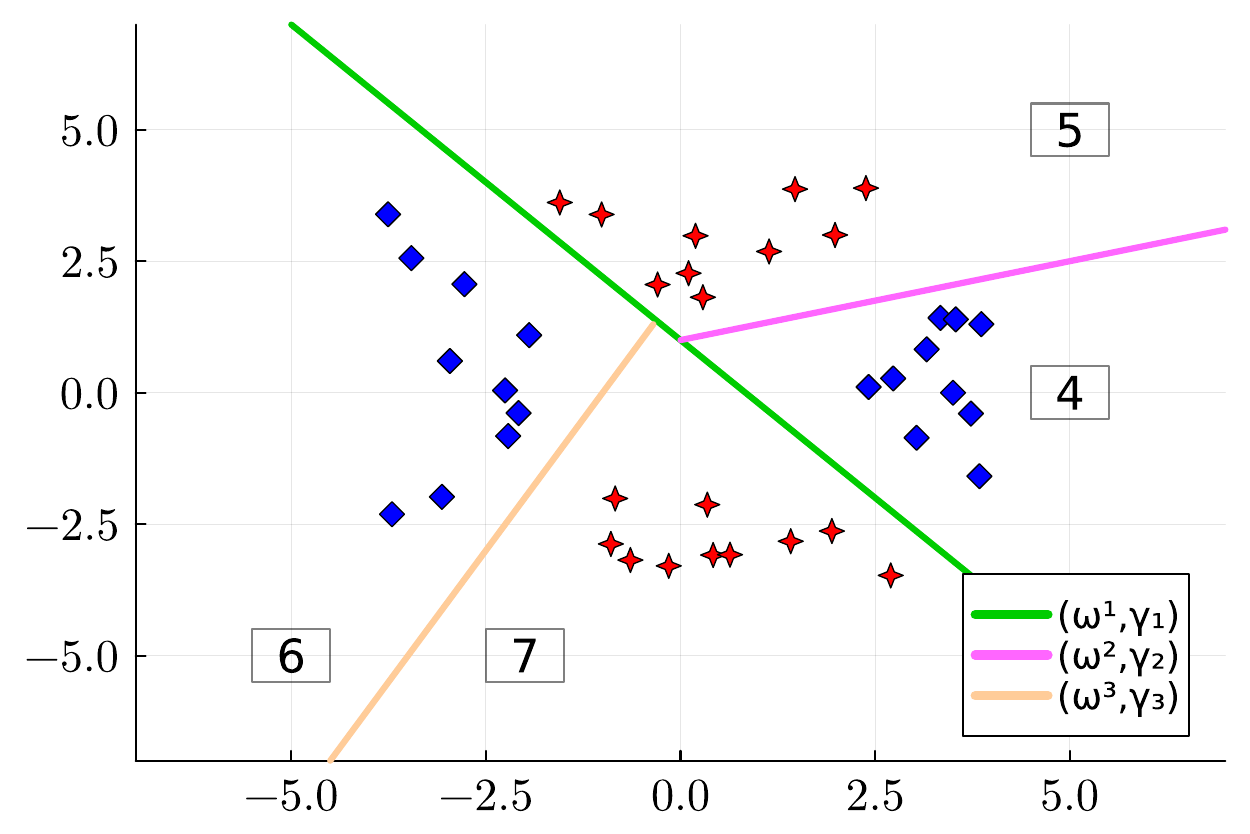}
  \caption{\rev{A 2-dimensional example and the hyperplanes produced by a tree-based partitioning with $D=2$.}}
  \label{fig:draws}
\end{figure}
Given a fixed depth $D$, a  point $x^i$, $i \in [1,n]$, with $c_i \in
\set{\mathcal{A},\mathcal{B}}$, is correctly classified if all
inequalities from the root to the leaf node are satisfied for some
leaf node~$t \in \mathcal{T}_L^{c_i}$.
Hence, a point~$x^i$ is correctly classified if
\begin{equation}
  \label{eq:classifyA}
  \bigvee_{t\in \mathcal{T}_L^{c_i}} \left \{ \left \{ \bigwedge_{b
        \in \mathcal{N}_R(t)} \left  [\left(\omega^b\right)^\top x^i
        -  \gamma_b \geq 1 \right] \right\} \bigwedge \left \{
      \bigwedge_{b \in \mathcal{N}_L(t)  }
      \left[\left(\omega^b\right)^\top x^i -  \gamma_b \leq -1
      \right]\right\}  \right\}.
\end{equation}
is satisfied.

In our running example with $D=2$, a point~$x^i$ with $c_i
=\mathcal{A}$ is correctly classified if
\begin{align*}
  & \left\{\left  [\left(\omega^1\right)^\top x^i -  \gamma_1 \leq - 1
    \right] \wedge \left  [\ \left(\omega^2\right)^\top x^i -  \gamma_2
    \leq - 1  \right] \right\}
  \\
  \vee
  & \left\{ \left
    [\left(\omega^1\right)^\top x^i -  \gamma_1 \geq 1  \right] \wedge
    \left  [\left(\omega^3\right)^\top x^i -  \gamma_3 \leq - 1
    \right]\right\}
\end{align*}
holds.

An unlabeled point~$x^i$, $i \in [n +1, N]$, is classified as
$\mathcal{A}$ if
\begin{equation}
  \label{eq:classifyXasA}
  \bigvee_{t\in \mathcal{T}_L^{\mathcal{A}} } \left \{ \left \{
      \bigwedge_{b \in \mathcal{N}_R(t)} \left[ \left(\omega^b\right)^\top
        x^i -  \gamma_b \geq 1 \right] \right\} \bigwedge \left \{ \bigwedge_{b
        \in \mathcal{N}_L(t)} \left[ \left(\omega^b\right)^\top x^i -
        \gamma_b \leq -1 \right] \right\}  \right\}
\end{equation}
is true.
Thus, our goal is to find $\omega$ and $\gamma$ such that
Expression~\eqref{eq:classifyA} is satisfied for all labeled
data points~$x^i$, $i \in [1,n]$, and such that the number of
unlabeled points~$x^i$, $i \in [n +1 , N]$, that satisfy
Expression~\eqref{eq:classifyXasA} is as close as possible to
$\lambda$.

For doing so, we first need to define suitable error measures that
then have to be minimized.
For this purpose, we define the branch and leaf error according to the
decision error and leaf error proposed by \textcite{bennett1996optimal}.
The first error is related to branch nodes.
For each labeled point, at each branch node, we consider the
inequality that must be satisfied and then measure by how much it is
violated.

\begin{defi}[Branch Error]
  Given a labeled point $x^i$, $i \in [1,n]$, in any branch node~$b
  \in\mathcal{T}_B$, the \emph{branch errors}~$\left(y_b^R\right)_i$
  and~$\left(y_b^L\right)_i $ are defined by
  \begin{align*}
    \left(y_b^R\right)_i
    \define \left[-\left(\omega^b\right)^\top x^i +  \gamma_b +1
    \right]^{+}
    \quad\text{and}\quad
    \left(y_b^L\right)_i
    \define \left[\left(\omega^b\right)^\top x^i -  \gamma_b +1
    \right]^{+}
  \end{align*}
  with $[v]^+ \define \max \set{0, v}$.
\end{defi}

The definition above can be interpreted as follows. If the point $x^i$
satisfies $(\omega^b)^\top x^i -  \gamma_b \geq 1$, and therefore
follows the right branch in some node~$b \in \mathcal{T}_B$, it holds
$(y_b^R)_i = 0$ and $(y_b^L)_i > 0$.
However, if the point follows the left branch in some node~$b \in
\mathcal{T}_B$, i.e., if $(\omega^b)^\top x^i - \gamma_b \leq -1$
holds, we obtain $(y_b^R)_i > 0$ and $(y_b^L)_i = 0$.

The next definition represents the error in each leaf node $t$. For
each labeled point, we sum over the branch errors along the path from
the root to the leaf node~$t$.

\begin{defi}[Leaf Error]
  \label{def:leaf-error}
  The \emph{leaf error} of a labeled point $x^i$, $i \in [1,n]$, at a
  leaf node~$t \in \mathcal{T}_L$ is defined by
  \begin{equation}
    \label{eq:LEab}
    \LE(x^i,t) \define
    \sum_{b \in \mathcal{N}_R(t)} \left(y_b^R\right)_i
    + \sum_{b \in \mathcal{N}_L(t)} \left(y_b^L\right)_i.
  \end{equation}
\end{defi}

Note that $\LE(x^i,t)$ is a linear expression and for each labeled
point~$x^i$, \mbox{$i \in [1,n]$}, Expression~\eqref{eq:classifyA} is
satisfied if $\LE(x^i,t) = 0$ for some $t \in \mathcal{T}_L^{c_i}$.
Additionally, $\LE(x^i,t) \geq 0$ holds for all
$t \in \mathcal{T}_{L}$.
Hence, each labeled point $x^i$ is correctly classified if the minimum
value of all leaf errors is zero, i.e., if
\begin{equation*}
  \min_{t \in c_i} \Set{\LE(x^i,t)} = 0
\end{equation*}
holds.
Besides that, we want to classify $\lambda$ unlabeled points as
$\mathcal{A}$, which means $\lambda$ unlabeled points must end up in
some $t \in \mathcal{T}_L^{\mathcal{A}}$.

To sum up, given the data matrix $X \in \mathbb{R}^{p \times N}$ and
$\lambda \in \mathbb{N}$ as well as some $s > 0$, our goal is
to find optimal parameters $\omega \in \mathbb{R}^{p\times 2^D-1}$,
$\gamma \in \mathbb{R}^{2^D-1}$ as well as $y^R, y^L \in
\mathbb{R}^{2^D-1 \times n}$ and $\xi \in \mathbb{R}$ that solve the
optimization problem
\begin{varsubequations}{P1}
  \label{eq:minimumerror}
  \begin{align}
    \min_{\omega,\gamma,y^R, y^L,\xi} \quad
    & \sum_{i=1}^{n} \min_{t\in \mathcal{T}_L^{c_i}} \{\LE(x^i,t)\}  +
      C\xi\label{eq:minimumerrorob}
    \\
    \st  \quad
    &\left(y_b^R\right)_i \geq -\left(\omega^b\right)^\top x^i +  \gamma_b +1, \quad
      b \in \mathcal{T}_B, \quad i \in [1,n],
      \label{constraint1}\\
    &\left(y_b^L\right)_i \geq \left(\omega^b\right)^\top x^i  -\gamma_b +1, \quad
      b \in \mathcal{T}_B, \quad i \in [1,n],
      \label{constraint2} \\
    & -s \leq \omega_j^b \leq s, \quad b \in\mathcal{T}_B, \quad j \in
      [1,p],
      \label{boundomega}\\
    & \left(y_b^R\right)_i, \left(y_b^L\right)_i \geq 0, \quad  b
      \in\mathcal{T}_B, \quad i \in [1,n],
      \label{posity}\\
    & \lambda - \xi \leq  \sum_{i=n+1}^{N}
      \sum_{t\in\mathcal{T}_L^{\mathcal{A}} }\left(\psi(x^i,t)\right)
      \leq \lambda + \xi,
      \label{constraintxi} \\
    &\xi \geq 0
      \label{constraintend}
  \end{align}
\end{varsubequations}
where  $\LE(x^i,t)$ is defined in \eqref{eq:LEab} and
\begin{equation}\label{psi}
  \psi(x^i,t) =
  \begin{cases}
    1, & \text{if } x^i \text{ ends in the leaf node } t,\\
    0, & \text{otherwise}.
  \end{cases}
\end{equation}
Note that the objective function in~\eqref{eq:minimumerrorob} models a
compromise between minimizing the classification error for the labeled
and unlabeled data.
The penalty parameter~$C>0$ aims to control the importance of the
slack variable~$\xi$.
Constraints~\eqref{constraint1} and~\eqref{constraint2} enforce on
which branch (left or right) the labeled point $x^i$ should traverse
in branch node~$b$.
Constraint~\eqref{boundomega} defines the domain of each
variable~$\omega^b$.
This constraint is not necessary for the correctness of the model but
will serve as a big-$M$-type parameter that is later used for deriving
certain bounds; see Section~\ref{sec:chapter-2}.
Constraint~\eqref{constraintxi} ensures that the number of unlabeled data
classified as $\mathcal{A}$ is as close to $\lambda$ as possible.

The functions $\min\defset{\LE(x^i,t)}{t\in \mathcal{T}_L^{c_i}}$ in the
objective function~\eqref{eq:minimumerrorob} and $\psi(x^i,t)$ in
Constraint~\eqref{constraintend} are discontinuous, which means that
Problem~\eqref{eq:minimumerror} cannot be solved easily by standard
solvers as such.
Hence, we will present a mixed-integer linear programming (MILP)
formulation using binary variables to re-model the objective
function~\eqref{eq:minimumerrorob} and to count the classification of
unlabeled points.


\section{The MILP Model}
\label{sec:chapter-2}

We start the development of a MILP model by using classic
SOS1-techniques \parencite{SOS1} and McCormick
inequalities \parencite{Mccormick} to re-phrase a $\min \min $ problem
as an MILP formulation.

\begin{lem}
  \label{lemma2}
  Consider a set of continuous functions $f_k : \mathbb{R}^p \to
  \mathbb{R}$, $k \in [1,d]$, for some $d\in \mathbb{N}$, and let $\Omega
  \subseteq \mathbb{R}^p$ be given.
  Suppose further that there exist values $u_k>0$ such that $$0
  \leq f_k(x)\leq u_k$$ holds for all $x \in  \Omega$ and $k \in
  [1,d]$.
  Then, $x^*\in \mathbb{R}^p$ is a solution to the problem
  \begin{subequations} \label{problemmain}
    \begin{align}
      \min_{x} \quad
      & \min_{k \in [1,d]} f_k(x)
        \label{eq:problemmain1}\\
      \st \quad
      & x \in \Omega
        \label{eq:problemmain2}
    \end{align}
  \end{subequations}
  if and only if there exist $\alpha^*, \beta^* \in \mathbb{R}^d$ such
  that $(x^*, \alpha^*, \beta^*)$ is a solution to the problem
  \begin{subequations}
    \label{problembeta}
    \begin{align}
      \min_{ x,\alpha,\beta }
      & \quad \sum_{k=1}^d \beta_k
        \label{eq:beta1} \\
      \st
      & \quad x \in  \Omega,
        \label{eq:beta2}  \\
      & \quad  \sum_{k =1}^d \alpha_k = 1,
        \label{eq:beta3} \\
      &  \quad    \alpha_k \in \{0,1\}, \quad k \in[1,d],
        \label{eq:beta4}     \\
      &  \quad    \beta_k\leq u_k\alpha_k, \quad k \in[1,d],
        \label{eq:beta5}   \\
      &  \quad    \beta_k\leq f_k(x), \quad k \in[1,d], \quad x \in
        \Omega ,
        \label{eq:beta6}   \\
      &  \quad   \beta_k\geq f_k(x) - u_k(1-\alpha_k), \quad k
        \in[1,d], \quad x \in  \Omega,
        \label{eq:beta7}   \\
      &  \quad    \beta_k\geq 0, \quad k \in[1,d].
        \label{eq:beta8}
    \end{align}
  \end{subequations}
\end{lem}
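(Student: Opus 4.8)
The plan is to show that the feasible set of \eqref{problembeta}, projected onto the $x$-variable, is exactly $\Omega$, and that along this projection the objective $\sum_{k=1}^d \beta_k$ collapses to $f_{k_0}(x)$ for the unique index $k_0$ with $\alpha_{k_0}=1$; the claimed equivalence then follows by comparing optimal values.

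First I would analyze an arbitrary feasible point $(x,\alpha,\beta)$ of \eqref{problembeta}. By \eqref{eq:beta3} and \eqref{eq:beta4} there is a unique index $k_0$ with $\alpha_{k_0}=1$ and $\alpha_k=0$ for $k\neq k_0$. For $k\neq k_0$, \eqref{eq:beta5} gives $\beta_k\leq 0$, which together with \eqref{eq:beta8} forces $\beta_k=0$. For $k=k_0$, \eqref{eq:beta6} gives $\beta_{k_0}\leq f_{k_0}(x)$ while \eqref{eq:beta7} gives $\beta_{k_0}\geq f_{k_0}(x)$, hence $\beta_{k_0}=f_{k_0}(x)$; the remaining constraints \eqref{eq:beta5} and \eqref{eq:beta8} are then automatically satisfied precisely because $0\leq f_{k_0}(x)\leq u_{k_0}$. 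Consequently every feasible point satisfies $x\in\Omega$ and $\sum_{k=1}^d\beta_k=f_{k_0}(x)$.

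Conversely, I would verify that for any $x\in\Omega$ and any index $k_0\in[1,d]$ the point obtained by setting $\alpha_{k_0}=1$, $\alpha_k=0$ for $k\neq k_0$, $\beta_{k_0}=f_{k_0}(x)$, and $\beta_k=0$ otherwise is feasible for \eqref{problembeta}; checking \eqref{eq:beta5}--\eqref{eq:beta8} again uses only the two-sided bound $0\leq f_k(x)\leq u_k$. Combining the two observations, the optimal value of \eqref{problembeta} equals $\min_{x\in\Omega}\min_{k\in[1,d]}f_k(x)$, i.e., the optimal value of \eqref{problemmain}. For the stated equivalence: if $x^*$ solves \eqref{problemmain}, pick $k_0\in\argmin_{k\in[1,d]}f_k(x^*)$ and build $(\alpha^*,\beta^*)$ as above; this point is feasible for \eqref{problembeta} with objective $f_{k_0}(x^*)$ equal to the common optimal value, hence optimal. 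If $(x^*,\alpha^*,\beta^*)$ solves \eqref{problembeta}, then $x^*\in\Omega$ and $\sum_k\beta^*_k=f_{k_0}(x^*)$ equals that optimal value, which, being squeezed between $\min_{k}f_k(x^*)$ and the same value, forces $\min_{k}f_k(x^*)$ to be optimal for \eqref{problemmain}, so $x^*$ solves \eqref{problemmain}.

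I do not expect a serious obstacle here; the argument is essentially bookkeeping with the big-$M$ constraints. The only point that needs genuine care is that the hypothesis $0\leq f_k(x)\leq u_k$ on all of $\Omega$ is used in both directions — the lower bound to keep the constructed $\beta$ nonnegative and consistent with \eqref{eq:beta6}--\eqref{eq:beta7}, the upper bound to make \eqref{eq:beta5} compatible with $\beta_{k_0}=f_{k_0}(x)$ — so I would invoke both inequalities explicitly. Note also that continuity of the $f_k$ is not actually needed for the reformulation; it would only matter for guaranteeing that minimizers exist, which the statement does not assert.
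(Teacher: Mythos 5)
Your proposal is correct and follows essentially the same route as the paper's proof: the SOS1 constraint \eqref{eq:beta3}--\eqref{eq:beta4} selects a unique index, and the McCormick-type constraints \eqref{eq:beta5}--\eqref{eq:beta8} force $\beta_k=\alpha_k f_k(x)$, so the objective of \eqref{problembeta} collapses to $f_{k_0}(x)$. Your version is in fact slightly more careful than the paper's, which routes through the intermediate bilinear problem $\min_{x,\alpha}\sum_k \alpha_k f_k(x)$ without spelling out the comparison of optimal values that you make explicit.
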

\begin{proof}
  By introducing the binary variables~$\alpha_k$, we obtain that $x^*$
  is a solution to Problem~\eqref{problemmain} if and only if $(x^*,
  \alpha^*)$ is a solution of the problem
  \begin{align*}
    \min_{ x,  \alpha  } \quad
    & \sum_{k =1}^d  \alpha_k f_k(x)
    \\
    \st \quad
    &  \eqref{eq:beta2}\text{--}\eqref{eq:beta4}.
  \end{align*}

  Besides that, for any $(x^*, \alpha^*)$ solution of the problem
  above, if $\alpha_k^* = 0$ holds for some $k\in [1,d]$, Constraints
  \eqref{eq:beta5} and \eqref{eq:beta8} enforce that $\beta^*_k = 0 =
  \alpha_k f_k(x^*)$. On the other hand, if $\alpha_k^*=1$ holds, by
  Constraints~\eqref{eq:beta6} and~\eqref{eq:beta7}, we obtain
  $\beta_k^* = \alpha^*_k f_k(x^*)$.

  Hence, $x^*$ is a solution to Problem~\eqref{problemmain} if and
  only if $(x^*, \alpha^*,\beta^*)$ exists such that
  \begin{equation*}
    \beta^*_k = \alpha^*_k f_k(x^*), \quad k \in [1,d],
  \end{equation*}
  is a solution to Problem~\eqref{problembeta}.
\end{proof}

To apply the previous lemma to Problem \eqref{eq:minimumerror} it is
necessary that $\LE(x^i, t)$ has lower and upper bounds, which are
given in following proposition.
Here and in what follows, $\Vert \cdot \Vert$ denotes the Euclidean
norm.

\begin{prop}
  \label{boundhiperprop}
  Given $[x^1, \dots, x^N]$ with $x^i \in \mathbb{R}^p$ for all $i
  \in [1,N]$ and $s > 0$, every optimal solution ($\omega,
  \gamma,y^{R},y^{L},\xi)$ to Problem~\eqref{eq:minimumerror}
  satisfies
  \begin{equation*}\label{boundhiper*}
    \vert (\omega^b)^\top x^i - \gamma_b \vert \leq \eta s \sqrt{p},
    \quad i \in [1,N], \quad b \in  \mathcal{T}_B,
  \end{equation*}
  and
  \begin{equation*}
    (y_b^R)_i \leq \eta s \sqrt{p} +1, \quad (y_b^L)_i \leq \eta s
    \sqrt{p} +1, \quad  i \in [1,n], \quad b \in  \mathcal{T}_B,
  \end{equation*}
  where
  $\eta \define \max_{i,k \in [1,N]} \Vert x^i -x^k \Vert$.
  Moreover,
  \begin{equation*}
    0 \leq \LE(x^i, t) \leq  B(s), \quad t \in i \in [1,n], \quad t \in  \mathcal{T}_L,
  \end{equation*}
  holds with
  \begin{equation}
    \label{boundLE}
    B(s) \define D(\eta s \sqrt{p} +1).
  \end{equation}
\end{prop}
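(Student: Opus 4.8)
The plan is to derive the three displayed bounds in turn, each from the previous one, starting from $\lVert\omega^b\rVert$ and finishing with $\LE(x^i,t)$. Throughout fix $b\in\mathcal{T}_B$. Constraint~\eqref{boundomega} gives $\lvert\omega_j^b\rvert\le s$ for every $j\in[1,p]$, hence
\[
  \lVert\omega^b\rVert=\Bigl(\sum_{j=1}^p(\omega_j^b)^2\Bigr)^{1/2}\le\sqrt{p}\,s .
\]
Since $\gamma_b$ is not directly bounded in Problem~\eqref{eq:minimumerror}, the heart of the matter — and the step I expect to be the main obstacle — is to show that \emph{at an optimal solution} one may assume
\[
  \min_{k\in[1,N]}(\omega^b)^\top x^k\;\le\;\gamma_b\;\le\;\max_{k\in[1,N]}(\omega^b)^\top x^k .
\]

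The approach to this confinement is an exchange argument. If $\gamma_b$ lay strictly above $M_b\define\max_k(\omega^b)^\top x^k$, then every data point satisfies $(\omega^b)^\top x^i-\gamma_b<0$ at node~$b$; I would then move $\gamma_b$ down toward~$M_b$ and re-set the slacks to their least feasible values, i.e.\ $(y_b^R)_i=[\gamma_b-(\omega^b)^\top x^i+1]^+$ and $(y_b^L)_i=[(\omega^b)^\top x^i-\gamma_b+1]^+$, which keeps Constraints~\eqref{constraint1}--\eqref{posity} satisfied and does not increase the objective~\eqref{eq:minimumerrorob}: the $(y_b^L)_i$ entering leaf errors along left branches through~$b$ remain zero while the $(y_b^R)_i$ entering along right branches shrink, and a symmetric argument bounds $\gamma_b$ from below. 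The delicate points — which is where the real work lies — are precisely that the re-set slacks stay feasible, that the cardinality slack~$\xi$ in Constraint~\eqref{constraintxi} is not worsened, and that the minimum over leaves in the objective does not increase. Granting the confinement, let $k_1$ and $k_2$ attain, respectively, the maximum and the minimum of $(\omega^b)^\top x^k$ over $k\in[1,N]$, so that $(\omega^b)^\top x^{k_2}\le\gamma_b\le(\omega^b)^\top x^{k_1}$. Then for every $i\in[1,N]$ both $(\omega^b)^\top x^i-\gamma_b$ and $\gamma_b-(\omega^b)^\top x^i$ are at most $(\omega^b)^\top x^{k_1}-(\omega^b)^\top x^{k_2}=(\omega^b)^\top(x^{k_1}-x^{k_2})$, so by the Cauchy--Schwarz inequality and the bound on $\lVert\omega^b\rVert$,
\[
  \bigl\lvert(\omega^b)^\top x^i-\gamma_b\bigr\rvert\;\le\;\lVert\omega^b\rVert\,\lVert x^{k_1}-x^{k_2}\rVert\;\le\;\eta s\sqrt{p},
\]
which is the first asserted inequality.

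For the branch errors, at an optimal solution the slacks take their least feasible values, so using Constraints~\eqref{constraint1}, \eqref{constraint2}, \eqref{posity} and the elementary bound $[a+1]^+\le\lvert a\rvert+1$,
\[
  (y_b^R)_i=\bigl[\gamma_b-(\omega^b)^\top x^i+1\bigr]^+\le\bigl\lvert(\omega^b)^\top x^i-\gamma_b\bigr\rvert+1\le\eta s\sqrt{p}+1,
\]
and likewise $(y_b^L)_i\le\eta s\sqrt{p}+1$ for all $i\in[1,n]$. Finally, for each leaf $t\in\mathcal{T}_L$ the index sets $\mathcal{N}_R(t)$ and $\mathcal{N}_L(t)$ are disjoint and their union is exactly the set of the $D$ branch nodes on the path from the root to~$t$, so $\lvert\mathcal{N}_R(t)\rvert+\lvert\mathcal{N}_L(t)\rvert=D$; by~\eqref{eq:LEab}, $\LE(x^i,t)$ is then a sum of $D$ nonnegative terms each bounded by $\eta s\sqrt{p}+1$, whence $0\le\LE(x^i,t)\le D(\eta s\sqrt{p}+1)=B(s)$, the left inequality being immediate from Constraint~\eqref{posity}. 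In summary, everything reduces to Cauchy--Schwarz and a count of the branch nodes on a root-to-leaf path once $\gamma_b$ has been confined to the interval of data projections onto $\omega^b$; that confinement is the only nonroutine ingredient, and I would allocate most of the care to the exchange argument establishing it.
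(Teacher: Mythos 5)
Your overall line of attack is the same as the paper's: bound $\lVert\omega^b\rVert\le s\sqrt p$ from \eqref{boundomega}, argue that at optimality the hyperplane cannot lie far from the data cloud so that $\lvert(\omega^b)^\top x^i-\gamma_b\rvert\le\eta s\sqrt p$, and then propagate this through \eqref{constraint1}--\eqref{posity} and the $D$-term sum \eqref{eq:LEab}. Your second and third steps are fine and coincide with the paper's (including the same implicit ``at optimality the slacks are at their least feasible values'' move, which the paper also makes). The difference is in the first step: the paper invokes a geometric fact from the cited reference --- a hyperplane that separates the data has distance at most $\eta=\max_{i,k}\lVert x^i-x^k\rVert$ from every data point --- whereas you try to prove the needed confinement of $\gamma_b$ directly by an exchange argument. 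That is a legitimate and more self-contained route, but you have left its crucial step unproven, and as sketched it does not deliver the interval you claim.

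Concretely: suppose $\gamma_b>M_b\define\max_k(\omega^b)^\top x^k$. As long as $\gamma_b\ge M_b+1$, every $(y_b^L)_i$ stays at $0$ while every $(y_b^R)_i$ strictly decreases as you lower $\gamma_b$, and no point changes branch, so the exchange works and you may assume $\gamma_b\le M_b+1$. But on the remaining stretch $\gamma_b\in(M_b,M_b+1]$ the quantities $(y_b^L)_i=[(\omega^b)^\top x^i-\gamma_b+1]^+$ \emph{increase} as $\gamma_b$ decreases; for a point whose minimizing leaf in \eqref{eq:minimumerrorob} is reached through the left branch of $b$, the leaf error therefore increases, and the trade-off against the decrease of $(y_b^R)_i$ (which enters only the errors of leaves reached through the right branch of $b$) is not settled by monotonicity --- all the more so because the leaf attaining the minimum may switch as $\gamma_b$ moves. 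So the exchange argument, as you describe it, only confines $\gamma_b$ to $[m_b-1,\,M_b+1]$, which yields $\lvert(\omega^b)^\top x^i-\gamma_b\rvert\le\eta s\sqrt p+1$ and hence $(y_b^R)_i,(y_b^L)_i\le\eta s\sqrt p+2$ and $B(s)=D(\eta s\sqrt p+2)$ --- perfectly usable big-$M$ values, but not the constants in the statement. To recover the stated bound you would either have to finish the exchange argument on the critical interval (and also verify, as you note, that $\xi$ and the $\psi$-counts are unaffected), or follow the paper and use the separating-hyperplane distance bound directly. You correctly identified the confinement of $\gamma_b$ as the only nonroutine ingredient; it is, however, exactly the ingredient that is missing.
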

\begin{proof} Let $\mathcal{H}$ be the set of hyperplanes that separates $[x^1,
  \dots, x^N] $ into two sets, and let the hyperplane $(\omega^b,
  \gamma_b) \in \mathcal{H}$ be the hyperplane with the minimal
  maximum distance to any point in $X$. Then, according to
  \textcite{maximumdist}, the maximum distance from a point $x^i,
  i~\in~[1,N]$, to $(\omega^b, \gamma_b)$  is $\eta$, i.e.,
  \begin{equation*}
    \dfrac{\vert (\omega^b)^\top x^i - \gamma_b \vert}{\Vert
      \omega^b\Vert } \leq \eta.
  \end{equation*}
  Moreover, Constraint~\eqref{boundomega} enforces that $\Vert
  \omega^b\Vert \leq s\sqrt{p}$.
  Hence,
  \begin{equation*}
    \vert (\omega^b)^\top x^i - \gamma_b \vert \leq \eta s \sqrt{p},
    \quad i \in [1,N], \quad b \in  \mathcal{T}_B,
  \end{equation*}
  holds and
  \begin{equation*}
    -\left(\omega^b\right)^\top x^i +  \gamma_b +1 \leq  \eta s
    \sqrt{p} +1 \quad \text{ as well as } \quad
    \left(\omega^b\right)^\top x^i  -\gamma_b +1 \leq  \eta s \sqrt{p}
    +1
  \end{equation*}
  are satisfied for all $b \in \mathcal{T}_B$ and $i \in [1,n]$.

  Note further that Problem~\eqref{eq:minimumerror} is a minimization
  problem and $\LE(x^i, t)$ is a sum of the $D$ non-negatives
  terms~$y_b^R$ and~$y_b^L$.
  Thus, Constraints~\eqref{constraint1} and~\eqref{constraint2} imply
  that
  \begin{equation*}
    (y_b^R)_i \leq \eta s \sqrt{p} +1, \quad (y_b^L)_i \leq \eta s
    \sqrt{p} +1, \quad  i \in [1,n], \quad b \in  \mathcal{T}_B,
  \end{equation*}
  and
  \begin{equation*}
    0 \leq \LE(x^i, t) \leq  B(s), \quad t \in \mathcal{T}_L,
  \end{equation*}
  holds.
\end{proof}
Note that Constraint~\eqref{boundomega} is decisive for obtaining the
upper bound $B(s)$ in the previous lemma.
Finally, to overcome the discontinuity of the function $\psi(\cdot)$
defined in \eqref{psi}, we also add binary variables and use SOS
techniques again to turn on or off the enforcement of a constraint.
More formal, by introducing the matrices $\beta \in
\mathbb{R}^{n\times 2^{D-1}}$, $\alpha \in \{0,1\}^{n\times 2^{D-1}}$,
$z \in \{0,1\}^{m\times 2^{D}-1}$, and $\delta \in \{0,1\}^{m \times
  2^{D-1}}$ of binary variables, we can reformulate the optimization
problem~\eqref{eq:minimumerror} as follows:
\begingroup
\allowdisplaybreaks
\begin{varsubequations}{P2}
  \label{eq:MIO}
  \begin{align}
    \min_{\omega,\gamma,y^R, y^L, \xi, \alpha, \beta , z,\delta }
    \quad
    & \sum_{i=1}^{n} \sum_{t\in \mathcal{T}^{c_i}_L} \beta_t^i +C\xi \label{eqMIOobj}
    \\   \st  \quad
    & \eqref{constraint1}\text{--}\eqref{posity},
      \nonumber\\
    & \sum_{t\in \mathcal{T}^{c_i}_L} \alpha_t^i\ = 1, \quad i \in
      [1,n],
      \label{constraint2alpha} \\
    & \alpha^i_t \in \{0,1\}, \quad i \in [1,n], \quad t \in
     \mathcal{T}_L^{c_i},
      \label{constraintalpha} \\
    & \beta^i_t \leq \LE(x^i,t) , \quad i \in [1,n], \quad t \in
      \mathcal{T}_L^{c_i},
      \label{boundbeta1}\\
    &  \beta^i_t  \geq  \LE(x^i,t) -B(s)(1-\alpha^i_t)  , \quad i \in
      [1,n],  \; t \in   \mathcal{T}_L^{c_i},
      \label{boundbeta2} \\
    &0 \leq \beta^i_t \leq B(s) \alpha^i_t, \quad i \in [1,n], \quad t
      \in \mathcal{T}_L,
      \label{boundbeta3}\\
    &  (\omega^b)^\top x^i - \gamma_b \leq z^b_i M -1, \quad b \in
      \mathcal{T}_B, \quad  i \in \mathcal{U},
      \label{BIGM1} \\
    &   (\omega^b)^\top x^i - \gamma_b \geq -(1-z^b_i) M +1, \quad b
      \in  \mathcal{T}_B, \quad i \in \mathcal{U},
      \label{BIGM2} \\
    & z^b_i \in \{0,1\}, \quad b \in   \mathcal{T}_B, \quad i \in
      \mathcal{U},
      \label{zbinary}\\
    &  \delta^t_i \leq z^b_i, \quad b\in \mathcal{N}_R(t), \quad t \in
      \mathcal{T}_L^{\mathcal{A}}, \quad i \in \mathcal{U},
      \label{binarydelta1} \\
    &  \delta^t_i \leq - z^b_i + 1, \quad b\in \mathcal{N}_L(t), \quad
      t \in \mathcal{T}_L^{\mathcal{A}}, \quad i \in \mathcal{U},
      \label{binarydelta2}  \\
    &  \delta^t_i \geq   \sum_{b\in \mathcal{N}_R(t)}  z^b_i +
      \sum_{b\in \mathcal{N}_L(t)} (- z^b_i + 1 ) - (D-1), \; t \in
      \mathcal{T}_L^{\mathcal{A}}, \; i \in \mathcal{U},
      \label{binarysum2} \\
    & \delta^t_i \in \{0,1\}, \quad t \in \mathcal{T}_L^{\mathcal{A}},
      \quad i \in \mathcal{U},
      \label{endMIO}\\
    &  \lambda - \xi  \leq   \sum_{i = 1+n}^{N }\sum_{t \in
      \mathcal{T}_L^{\mathcal{A}}} \delta^t_i  \leq \lambda + \xi,
      \label{sumbin}  \\
    &  \xi \geq 0,
  \end{align}
\end{varsubequations}
\endgroup
where $M$ needs to be chosen sufficiently large.
The constraints in~\eqref{constraintalpha} enforce that the minimum
value of $\LE(x^i,t)$ is selected for each $x^i \in [1,n]$ and
Constraints~\eqref{boundbeta1}--\eqref{boundbeta3} ensure
\begin{equation*}
  \beta^i_t = \alpha^i_t \LE(x^i,t),
  \quad i \in [1,n], \quad t \in \mathcal{T}_L.
\end{equation*}
As $z^b_i$ is binary, Constraints \eqref{BIGM1} and \eqref{BIGM2} lead
to
\begin{align*}
  (\omega^b)^\top x^i - \gamma_b \geq 1 \implies z^b_i = 1, \quad b
  \in   \mathcal{T}_B, \quad i \in \mathcal{U},
  \\
  (\omega^b)^\top x^i - \gamma_b \leq -1 \implies z^b_i   = 0, \quad b
  \in   \mathcal{T}_B, \quad i \in \mathcal{U}.
\end{align*}
Furthermore, as $\delta_i^t$ is binary as well, for all $i \in
\mathcal{U}$ and $t \in \mathcal{T}_L^{\mathcal{A}}$,
Constraints~\eqref{binarydelta1}--\eqref{binarysum2} lead to
\begin{equation*}
  \delta_i^t =
  \begin{cases}
    1, & \text{ if }  z_i^b = 1 \text{ for } b \in \mathcal{N}_R(t)
    \text{ and } z_i^b = 0 \text{ for } b \in \mathcal{N}_L(t),
    \\
    0, & \text{ otherwise},
  \end{cases}
\end{equation*}
i.e.,
\begin{equation*}
  \delta_i^t =
  \begin{cases}
    1, & \text{ if }  x^i \text{ ends up in the leaf node } t,
    \\
    0, & \text{ otherwise}.
  \end{cases}
\end{equation*}
Constraint~\eqref{sumbin} ensures that the number of unlabeled data
classified as $\mathcal{A}$ is as close to $\lambda$ as possible.
Reformulation~\eqref{eq:MIO} is an MILP.
We refer to this problem as S$^2$OCT.
As usual in mixed-integer optimization, the big-$M$-value
is crucial as is the choice of~$s$ in Constraint~\eqref{boundomega}.
However, precisely based on $s$ we have an exact value for $M$, as
discussed in the following result.

\begin{prop}\label{prop:BIGM}
  Consider $\eta \define \max_{i,k \in [1,N]} \Vert x^i-x^k \Vert$.
  Any feasible point for Problem~\eqref{eq:MIO} satisfies
  \eqref{BIGM1} and \eqref{BIGM2} for
  $M = \eta s \sqrt{p} +1$.
\end{prop}
\begin{proof}
  Note that if $z_i^b =1$ for some $i \in \mathcal{U}$ and $b \in
  \mathcal{T}_B$, Constraints~\eqref{BIGM1} and~\eqref{BIGM2} imply
  \begin{equation*}
    M-1 \geq (\omega^b)^\top x^i - \gamma_b \geq 1.
  \end{equation*}
  Moreover, since $(\omega^b)^\top x^i - \gamma_b\geq 0$, because of
  Proposition \ref{boundhiperprop}, we get
  \begin{equation*}
    (\omega^b)^\top x^i - \gamma_b  \leq \eta s \sqrt{p}.
  \end{equation*}
  This means that $M = \eta s \sqrt{p} +1 $ does not cut off any
  feasible solution.

  On the other hand, if $z_i^b =0$ holds for some $i \in \mathcal{U}$
  and $b \in \mathcal{T}_B$, due to Constraint~\eqref{BIGM1}
  and~\eqref{BIGM2}, we obtain
  \begin{equation*}
    1 - M \leq (\omega^b)^\top x^i - \gamma_b \leq -1,
  \end{equation*}
  and, similarly, $M = \eta s \sqrt{p} +1$ does not cut of any
  feasible solution as well.
\end{proof}


\section{Numerical Results}
\label{sec:numerical-results}

In this section, we present and discuss our computational results that
exemplify the advantages of considering the known total amount of each
class. We analyze this on different test sets from the literature. The
test sets are discussed in Section~\ref{subsection-test-sets}, while
the computational setup is described in
Section~\ref{subsection-comp-setup}. The evaluation criteria are
depicted in Section~\ref{comparsions-OCT}. Finally, the numerical
results are discussed in Section~\ref{numerical-results}.

\subsection{Tests Sets}
\label{subsection-test-sets}

For the computational analysis of the proposed approach, we
consider the subset of instances presented by \textcite{Olson2017PMLB}
that are applicable to classification problems and that have at most
three classes. Repeated instances are eliminated, and all instances
are reduced to complete cases only. If an instance contains three
classes, we convert them into two classes, such that the class with
label~1 represents the class  $\mathcal{A}$  and the other two classes
represent the class $\mathcal{B}$.
This results in a final test set of 97~instances, as listed in
Table~\ref{table1}.
To avoid numerical instabilities, all data sets are scaled as
follows.
For each coordinate $j \in [1,p]$, we compute
\begin{equation*}
  l_j = \min_{i \in [1,N]}\Set{x^i_j},
  \quad
  u_j = \max_{i \in [1,N]}\Set{x^i_j},
  \quad
  m_j = 0.5 \left(l_j + u_j \right)
\end{equation*}
and shift each coordinate~$j$ of all data points~$x^i$ via $\bar{x}^i_j
= x^i_j - m_j$. Furthermore, if a coordinate~$j$ of the
re-scaled points is still large, i.e.,  if $\tilde{l}_j = l_j - m_j <-
10^{2}$ or $\tilde{u}_j = u_j - m_j > 10^{2}$ holds, it is re-scaled
via
\begin{equation*}
  \tilde{x}^i_j = (\bar{v} - \underline{v} ) \frac{\bar{x}^i_j -
    \tilde{l}_j}{\tilde{u}_j-\tilde{l}_j} +  \bar{v}
\end{equation*}
with $\bar{v} = 10^2$ and $ \underline{v} = -10^{2}$.
The corresponding 10 instances that we re-scale are marked with an
asterisk in Table~\ref{table1}.

\revTwo{Categorical features are treated as numerical
  variables. Unlike linear models that require one-hot encoding,
  tree-based models naturally handle numerical splits and capture
  nonlinear relationships. While this numerical treatment implicitly
  imposes an ordinal structure, tree models can mitigate this
  limitation through recursive re-splitting, provided the tree depth
  is sufficient. For shallow trees, one-hot encoding or category
  relabeling can improve splitting efficiency. However, one-hot
  encoding expands the feature space and increases computational
  overhead. Given that this work focuses primarily on the optimization
  problem rather than feature engineering, we use the simpler
  numerical encoding to maintain computational efficiency.}

\begin{center}
  \begin{longtable}{l c c c }
    \caption{Overview over the entire test set with number of
      points~($N$) and dimension~($p$). \rev{Re-scaled instances are
        marked with an asterisk.}}
    \label{table1}\\
    \toprule
    ID & Instance  &$ N $ &$p$ \\
    \midrule
    1                         & prnn\_synth         & 250          & 2   \\
    $2^*$   &  analcatdata\_asbestos        &  73    &  3   \\
    $3^*$    &  lupus                        &  87    &  3  \\
    4     &  analcatdata\_boxing1         &  120   &  3  \\
    5       &  analcatdata\_boxing2         &  132   &  3   \\
    6        &  haberman     &  289   &  3     \\
    7        &  analcatdata\_happiness       &  60    &  3  \\
    $8^*$     &  analcatdata\_aids            &  50    &  4  \\
    9  &  analcatdata\_lawsuit         &  263   &  4  \\
    10     &  iris       &  147   &  4 \\
    11          &  hayes\_roth        &  93    &  4 \\
    12       &  balance\_scale               &  625   &  4 \\
    13       &  parity5     &  32    &  5  \\
    $14^*$        &  bupa                         &  341   &  5  \\
    15        &  irish                        &  470   &  5 \\
    16         &  phoneme       &  5349  &  5  \\
    17                        &  tae                          &  110   &  5  \\
    18                        &  new\_thyroid                 &  215   &  5  \\
    $19^*$                        &  analcatdata\_bankruptcy      &  50    &  6 \\
    $20^*$                        &  analcatdata\_creditscore     &  100   &  6  \\
    21                        &  mux6       &  64    &  6 \\
    22                        &  monk3                        &  357   &  6 \\
    23                        &  monk1                        &  432   &  6 \\
    24                        &  monk2                        &  432   &  6 \\
    25                        &  appendicitis                 &  106   &  7  \\
    26                        &  prnn\_crabs                  &  200   &  7  \\
    $27^*$ &  penguins                     &  333   &  7  \\
    28 &  postoperative\_patient\_data &  78    &  8  \\
    $29^*$                        &  biomed                       &  209   &  8  \\
    $30^*$                        &  pima                         &  768   &  8  \\
    $31^*$                        &  cars                         &  392   &  8  \\
    32                        &  analcatdata\_japansolvent    &  52    &  9  \\
    33                        &  glass2                       &  162   &  9  \\
    34                        &  breast\_cancer               &  272   &  9  \\
    35                        &  saheart                      &  462   &  9  \\
    36                        &  threeOf9                     &  512   &  9 \\
    37                        &  profb                        &  672   &  9  \\
    38                        &  breast\_w                    &  463   &  9 \\
    39                        &  tic\_tac\_toe                &  958   &  9  \\
    40                        &  xd6                          &  512   &  9  \\
    41                        &  cmc                          &  1425  &  9  \\
    $42$ &  analcatdata\_cyyoung9302     &  92    &  10\\
    $43$ &  analcatdata\_cyyoung8092     &  97    &  10 \\
    44 &  breast                       &  691   &  10 \\
    45                        &  flare                        &  315   &  10 \\
    46                        &  parity5+5                    &  1024  &  10\\
    $47$                        &  magic                        &  18905 &  10 \\
    48                        &  analcatdata\_fraud           &  42    &  11 \\
    $49$                        &  heart\_statlog               &  270   &  13 \\
    50                        & heart\_h                                                                                    & 293                          & 13  \\
    51 &  hungarian                             &  293   &  13   \\
    $52^*$ &  cleve           &  302   &  13   \\
    $53^*$ &  heart\_c         &  302   &  13  \\
    54 &  wine\_recognition           &  178   &  13   \\
    $55^*$ &  australian               &  690   &  14 \\
    $56^*$ &  adult   &  48790 &  14   \\
    $57^*$ &  schizo    &  340   &  14   \\
    $58^*$ &  buggyCrx    &  690   &  15   \\
    59 &  labor        &  57    &  16   \\
    60 &  house\_votes\_84           &  342   &  16   \\
    61 &  hepatitis             &  155   &  19   \\
    $62^*$ &  credit\_g                    &  1000  &  20   \\
    63 &  gametes\_e\_0.1H             &  1599  &  20   \\
    64 &  gametes\_e\_0.4H                &  1600  &  20   \\
    65 &  gametes\_e\_0.2H               &  1600  &  20   \\
    66 &  gametes\_h\_50 &  1592  &  20   \\
    67 &  gametes\_h\_75 &  1599  &  20   \\
    $68^*$ &  churn      &  5000  &  20   \\
    $69^*$ &  ring        &  7400  &  20   \\
    70 &  twonorm         &  7400  &  20   \\
    71 &  waveform\_21        &  5000  &  21   \\
    72 &  ann\_thyroid            &  7129  &  21   \\
    73 &  spect   &  228   &  22   \\
    74 &  horse\_colic            &  357   &  22   \\
    75 &  agaricus\_lepiota         &  8124  &  22   \\
    $76^*$ &  hypothyroid              &  3086  &  25   \\
    $77^*$ &  dis                  &  3711  &  29   \\
    $78^*$ &  allhypo      &  3709  &  29   \\
    $79^*$ &  allbp                                       &  3711  &  29   \\
    $80^*$ &  breast\_cancer\_wisconsin           &  569   &  30   \\
    81 &  backache &  180   &  32   \\
    82 &  ionosphere        &  351   &  34   \\
    83 &  chess        &  3196  &  36   \\
    84 &  waveform\_40    &  5000  &  40   \\
    85 &  connect\_4                   &  67557 &  42   \\
    86 &  spectf      &  267   &  44   \\
    $87^*$ &  tokyo1      &  959   &  44   \\
    88 &  molecular\_biology\_promoters       &  106   &  57   \\
    $89^*$ &  spambase            &  4210  &  57   \\
    90 &  sonar         &  208   &  60   \\
    91 &  splice        &  2903  &  60   \\
    92 &  coil2000       &  8380  &  85   \\
    $93^*$ &  Hill\_Valley\_without\_noise     &  1212  &  100  \\
    $94^*$ &  clean1        &  476   &  168  \\
    $95^*$ &  clean2                  &  6598  &  168  \\
    96 &  dna                         &  3002  &  180  \\
    97 &  gametes\_e\_1000atts       &  1600  &  1000 \\
    \bottomrule
  \end{longtable}
\end{center}


In our computational study, we aim to emphasize the significance of
cardinality constraints, particularly in the context of non-representative
biased samples.  Biased samples are frequently observed in non-probability
surveys, which are surveys where the inclusion process is not
monitored and, hence, the inclusion probabilities are unknown as
well. Therefore, correction methods like inverse inclusion probability
weighting are not applicable. For an understanding of inverse
inclusion probability weighting, see \textcite{Skinner2011} and
references therein.

To simulate this scenario, we create 5~biased samples with
$\SI{10}{\percent}$ of the data being labeled for each instance.
In contrast to a simple random sample,  where each point has an equal
probability of being chosen as labeled data, in the biased sample the
labeled data are chosen with probability $\SI{85}{\percent}$ for being
on class $\mathcal{A}$.
\rev{Hence, we obtain the labeled data, the unlabeled data, and have additionally the known total of class $\mathcal{A}$ in the full population.}
Then, for each instance, with a
time limit of \SI{7200}{\second} each, we apply the methods listed in
Section~\ref{subsection-comp-setup}.

Additionally, in Appendix~\ref{sec:num-results-simple-sample}, we
provide the results under simple random sampling, which produces
unbiased samples. We see that the results from the proposed methods
are similar to OCT-H \parencite{Bertsimas2017} in this
setting. Despite the extra computational costs, we do not observe
any drawbacks by using S$^2$OCT in more simple survey designs.

\subsection{Computational Setup}
\label{subsection-comp-setup}

For each one of the 485~instances described in
Section~\ref{subsection-test-sets}, the following approaches are
compared:
\begin{enumerate}
\item[(a)] OCT-H as described in \textcite{Bertsimas2017}, where only
  labeled  data are considered.
\item[(b)] S$^2$OCT as given in Problem \eqref{eq:MIO} with $B(s)$ as
  in Expression \eqref{boundLE} and $M$ as given in
  Proposition~\ref{prop:BIGM}.
\end{enumerate}
Our comparison has been implemented in \codename{Julia}~1.8.5 and we
use \codename{Gurobi}~9.5.2 and
\codename{JuMP} \parencite{DunningHuchetteLubin2017} to solve  OCT-H
as well as  Problem \eqref{eq:MIO}. All computations were executed on
the high-performance cluster ``Elwetritsch'', which is part of the
``Alliance of High-Performance Computing Rheinland-Pfalz'' (AHRP). We
used a single Intel XEON SP 6126 core with \SI{2.6}{\giga\hertz} and
\SI{64}{\giga\byte}~RAM. \rev{All code that was used for this research is made publicly available at
\url{https://github.com/mariaepinheiro/S2OCT}.}

To give the same importance to labeled and unlabeled data, \rev{based
  on preliminary numerical tests,} we set the
parameter $C=1$ in S$^2$OCT. Furthermore, we set the complexity
parameter $\alpha = 0$ in OCT-H. Also, as required by OCT-H, all
points $x^i$ belong to $[0,1]^d$.  For this to hold, we re-scaled the
data as discussed in Section~\ref{subsection-test-sets}, with the
difference that $\tilde{l}_j < 0$ and $\tilde{u}_j > 1$.

Proposition~\ref{prop:BIGM} establishes a relationship between $s$ and
$M$. To keep $M$ at least 500 and $s$ sufficiently large, from
preliminary numerical tests we set, in S$^2$OCT,
\begin{equation*}
  s =
  \begin{cases}
    \max\{10,499/(\eta\sqrt{d})\},  & \text{if } N \in  [1,650),\\
    \max\{20,499/(\eta\sqrt{d})\},  &  N \in  [650,1500).\\
    \max\{40,499/(\eta\sqrt{d})\},  & \text{otherwise}.
  \end{cases}
\end{equation*}
By default, MIP solvers such as \codename{Gurobi} aim to achieve a balance between
exploring new feasible solutions and verifying the optimality of the
current solution. In preliminary numerical tests, we observed that the
solver required significant time to find feasible solutions. Therefore,
we selected \codename{Gurobi}'s parameter \textsf{MIPFocus} = 1, i.e., the solver
focuses more on finding feasible solutions.
Moreover, $D$ in OCT-H and in S$^2$OCT are fixed as
\begin{equation*}
  D=
  \begin{cases}
    2,  & \text{if } N \in  [1,1000),\\
    3,  & \text{otherwise}.
  \end{cases}
\end{equation*}

\subsection{Evaluation Criteria}
\label{comparsions-OCT}

The first evaluation criterion is the run time of \mbox{OCT-H} and S$^2$OCT.
To compare run times, we use empirical cumulative distribution
functions (ECDFs).
Specifically, for $S$ being a set of solvers (or approaches as above)
and for $\bar{P}$ being a set of problems, we denote by $t_{\bar{p},s}
\geq 0$ the run time of the approach~$s \in S$ applied to the
problem~$\bar{p} \in \bar{P}$ in seconds. If $t_{\bar{p},s} > 7200$,
we consider problem~$\bar{p}$ as not being solved by approach~$s$.
With these notations, the performance profile of approach~$s$ is the
graph of the function~$\gamma_s : [0, \infty) \to [0,1]$ given by
\begin{equation*}
  \gamma_s(\sigma) = \frac{1}{\vert \bar{P}
    \vert}\big\vert\left\{\bar{p} \in \bar{P}:
    t_{\bar{p},s} \leq \sigma \right\}\big \vert.
\end{equation*}
Furthermore, since the true labels of all points are known in the
simulation, we categorize them into four distinct categories: true
positive (TP) or true negative (TN) if the point is classified
correctly in classes $\mathcal{A}$ or $\mathcal{B}$, respectively, as
well as false positive (FP) if the point is misclassified in the
class $\mathcal{A}$  and as false negative (FN) if the point is
misclassified in the class $\mathcal{B}$.
Using this information, we calculate two classification metrics.
The first one is accuracy ($\AC$).
It measures the proportion of correctly classified points and is given
by
\begin{equation}
  \label{AC}
  \AC \define \frac{\TP + \TN}{\TP + \TN + \FP + \FN} \in [0,1].
\end{equation}
Observe that for $\AC$ the greater the value, the better the
classification.
The second metric is Matthews correlation coefficient ($\MCC$), \rev{ which is applicable also to unbalanced settings \parencite{chicco2023matthews}}. It
measures the  correlation coefficient between the observed and
predicted  classifications and is computed by
\begin{equation}
  \label{MCC}
  \MCC \define \frac{\TP \times \TN - \FP \times \FN }{\sqrt{(\TP +
      \FP)( \TP + \FN)(\TN + \FP)(\TN + \FN)}} \in [-1,1].
\end{equation}
As for accuracy, the higher the $\MCC$, the better the classification.  The
main question is the following: For a specific instance, does S$^2$OCT or OCT-H have
a higher accuracy and $\MCC$? Hence, we compute the instance-wise
difference of the accuracy and $\MCC$ according to
\begin{equation}
  \label{comparOCTH}
  \overline{\AC} \define \AC_{\SOCT}-\AC_{\OCTH}
  \quad
   \overline{\MCC} \define \MCC_{\SOCT}-\MCC_{\OCTH},
\end{equation}
where $\AC_{\OCTH}$ and $\AC_{\SOCT}$ are computed as in~\eqref{AC},
and $\MCC_{\OCTH}$ and $\MCC_{\SOCT}$ as in~\eqref{MCC}.
To keep the numerical results section concise, we report on precision
and recall in Appendix~\ref{sec:furth-numer-results}.

\subsection{Numerical Results}
\label{numerical-results}

\subsubsection{Run Time}
\label{sec:run-time}

The number of continuous and binary variables is an important property
for comparing different approaches. For this purpose, we
computed the number of these variables for all instances presented
in Section~\ref{subsection-test-sets}. Table~\ref{tablevariables}
provides a quantile analysis of these quantities.

Observe that S$^2$OCT has more variables than OCT-H. Therefore, it can be
expected that OCT-H solves more instances than S$^2$OCT within the
time limit.  However, note that the two approaches are designed for
different purposes.  Our approach considers labeled and unlabeled
points together with the respective cardinality constraint, while
OCT-H only deals with labeled points. Figure \ref{fig:runtime} shows
ECDFs of run times of OCT-H and S$^2$OCT. OCT-H solved
\SI{86}{\percent} of the instances within the time limit, while
S$^2$OCT does so for \SI{58}{\percent}.
As expected, OCT-H has significantly shorter run times.

\begin{center}
  \begin{table}
    \caption{Different quantile values for the number of continuous
      and binary variables}
    \label{tablevariables}
    \begin{tabular}{rrrrr}
      \toprule
      & \multicolumn{2}{ c }{Continuous} &  \multicolumn{2}{ c }{  Binary } \\
      \cmidrule(lr){2-3}\cmidrule(lr){4-5}
       & OCT-H & S$^2$OCT & OCT-H & S$^2$OCT\\
      \midrule
      $\min$ & $31$ & $43$ & $42$ & $151$  \\
      $\SI{25}{\percent}$ & $61$ & $195$ & $123$ & $846$ \\
      $\SI{50}{\percent}$ & $97$ & $366$ & $226$ & $1843$  \\
      $\SI{75}{\percent}$ & $319$ & $3028$ & $1451$ & $16\,469$  \\
      $\max$ & $14\,039$ & $121\,910$ & $54\,373$ & $695\,835$ \\
      \bottomrule
    \end{tabular}
  \end{table}
\end{center}


\begin{figure}
  \centering
  \includegraphics[width=0.6\textwidth]{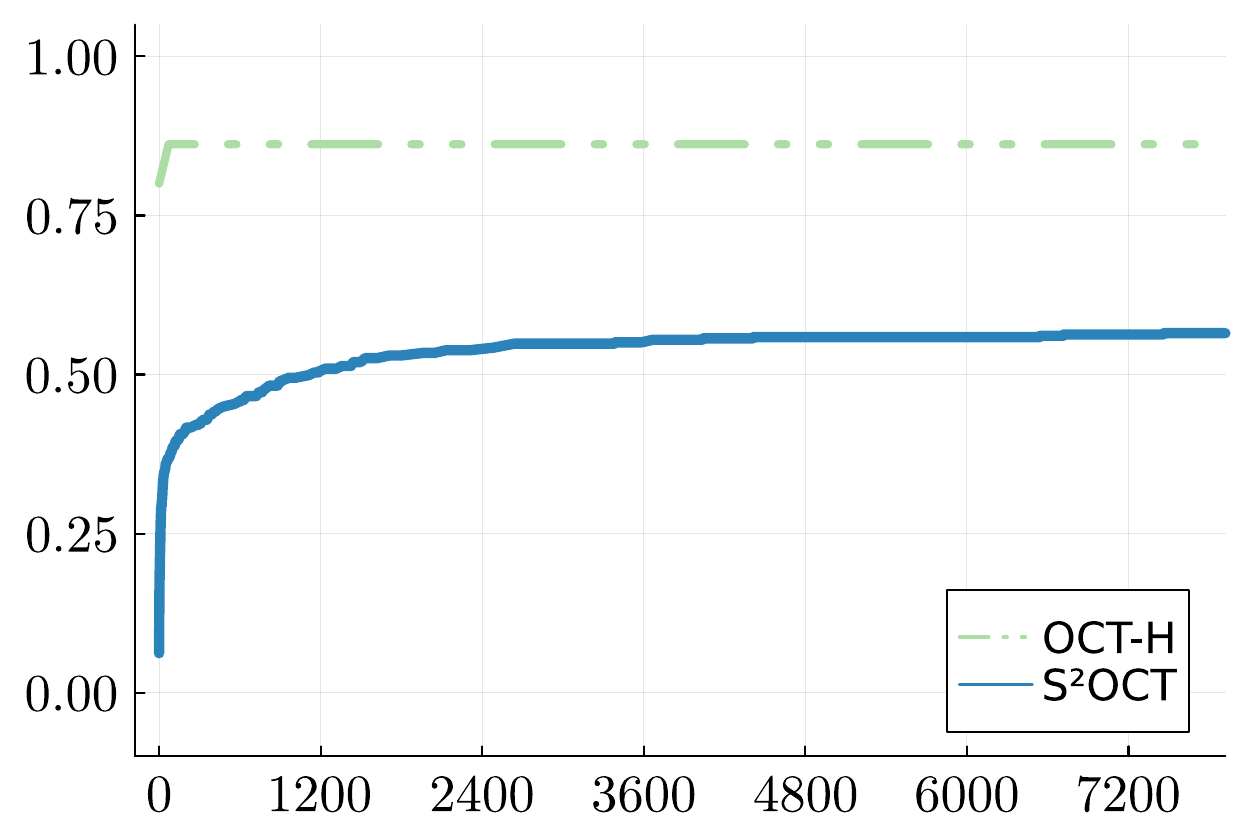}
  \caption{ECDFs for run times (in seconds).}
  \label{fig:runtime}
\end{figure}

\subsubsection{Accuracy and $\MCC$}
\label{sec:accuracyandMCC}
\begin{figure}[t]
  \centering
   \begin{tabular}{@{}>{\centering\arraybackslash}m{15pt}@{\hspace{1pt}}c@{\hspace{5pt}}c@{}}
        & \revTwo{All instances}  & \revTwo{Both terminate within TL} \\
        \rotatebox{90}{\revTwo{$\overline{\text{AC}}$ (All)}} &
        \includegraphics[width=0.45\textwidth, valign=m]{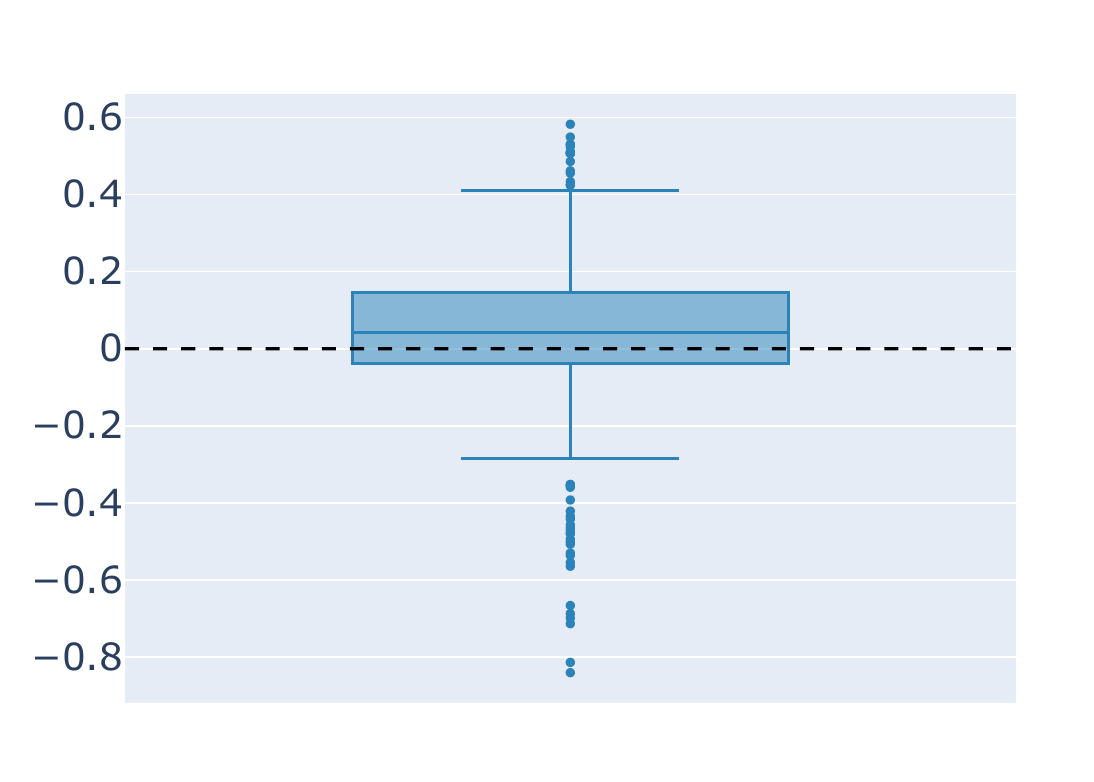} &
        \includegraphics[width=0.45\textwidth, valign=m]{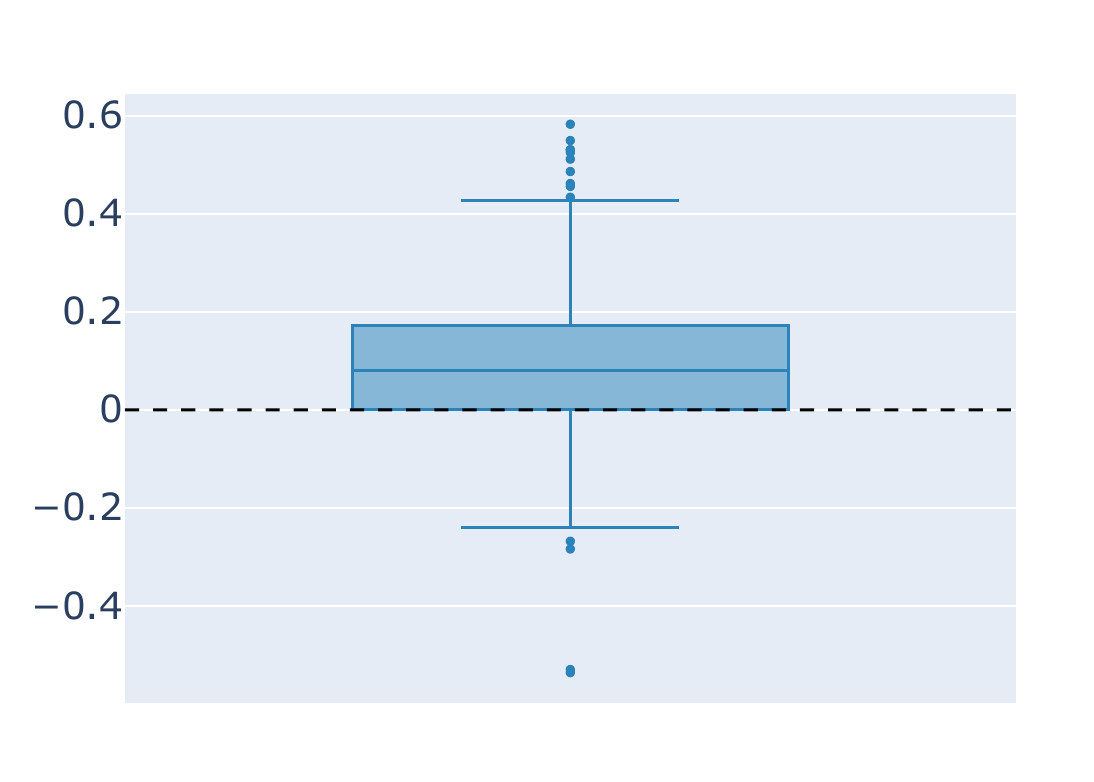} \\ [3ex]
        \rotatebox{90}{\revTwo{$\overline{\text{AC}}$ (Unlabeled)}} &
        \includegraphics[width=0.45\textwidth, valign=m]{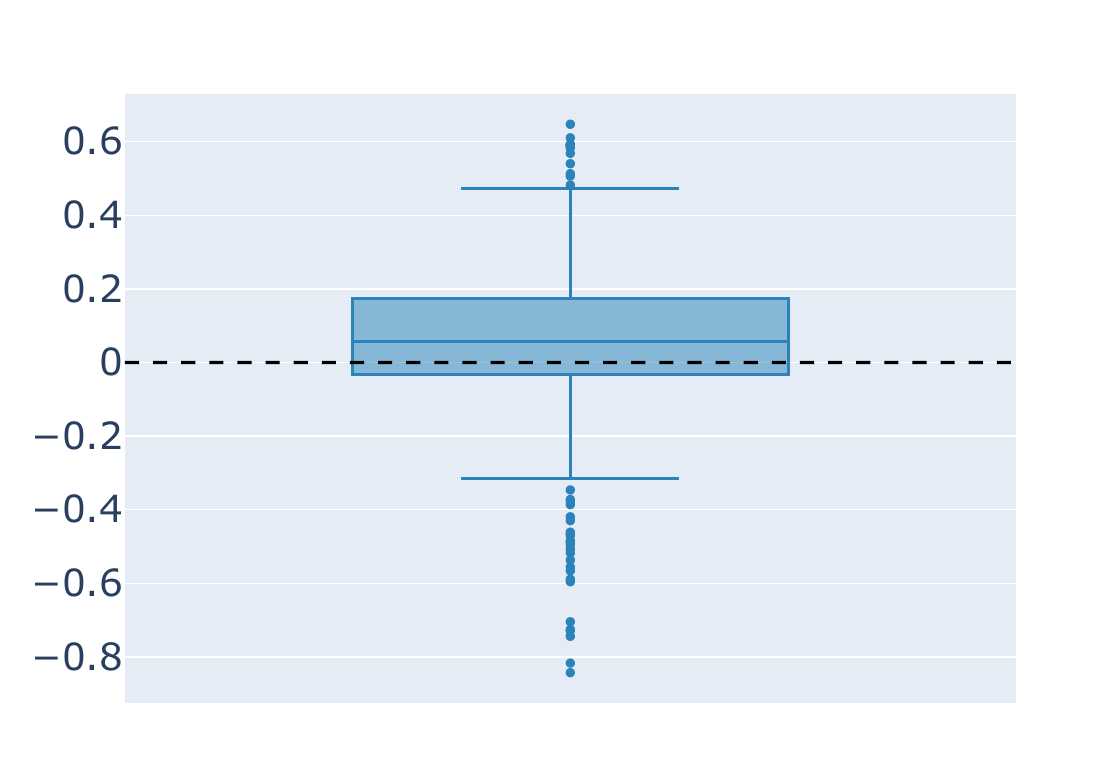} &
        \includegraphics[width=0.45\textwidth, valign=m]{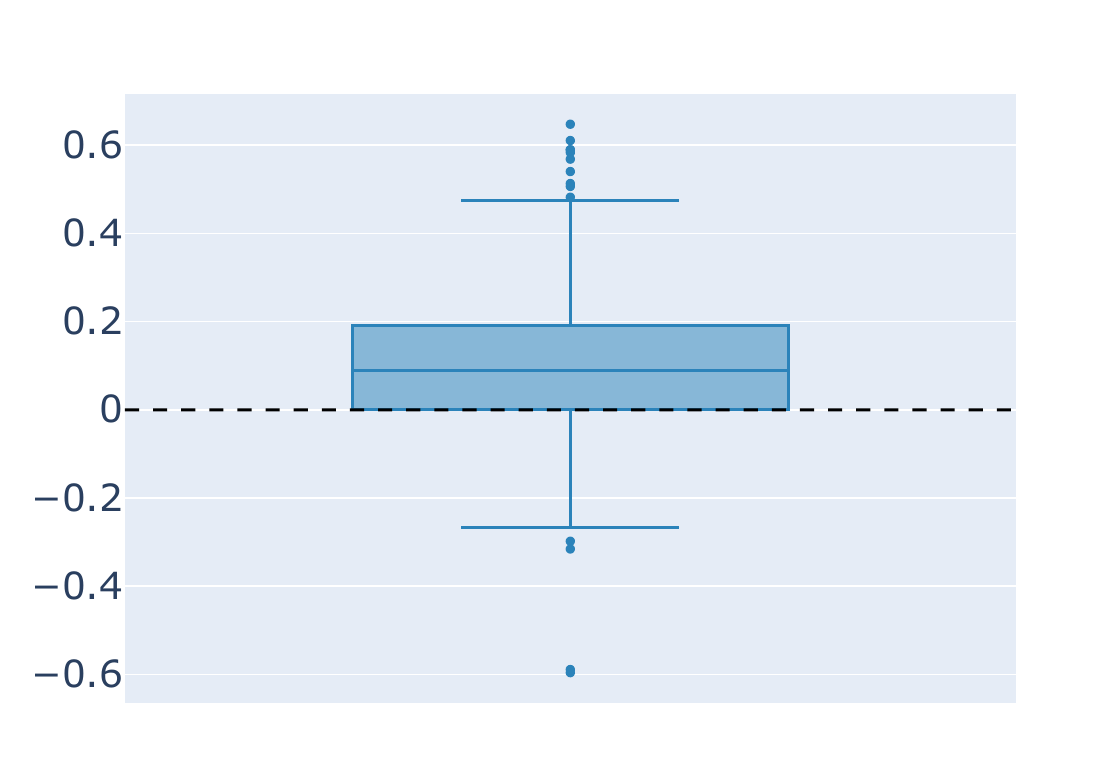} \\ [3ex]
        \rotatebox{90}{\revTwo{$\overline{\text{MCC}}$ (All)}} &
        \includegraphics[width=0.45\textwidth, valign=m]{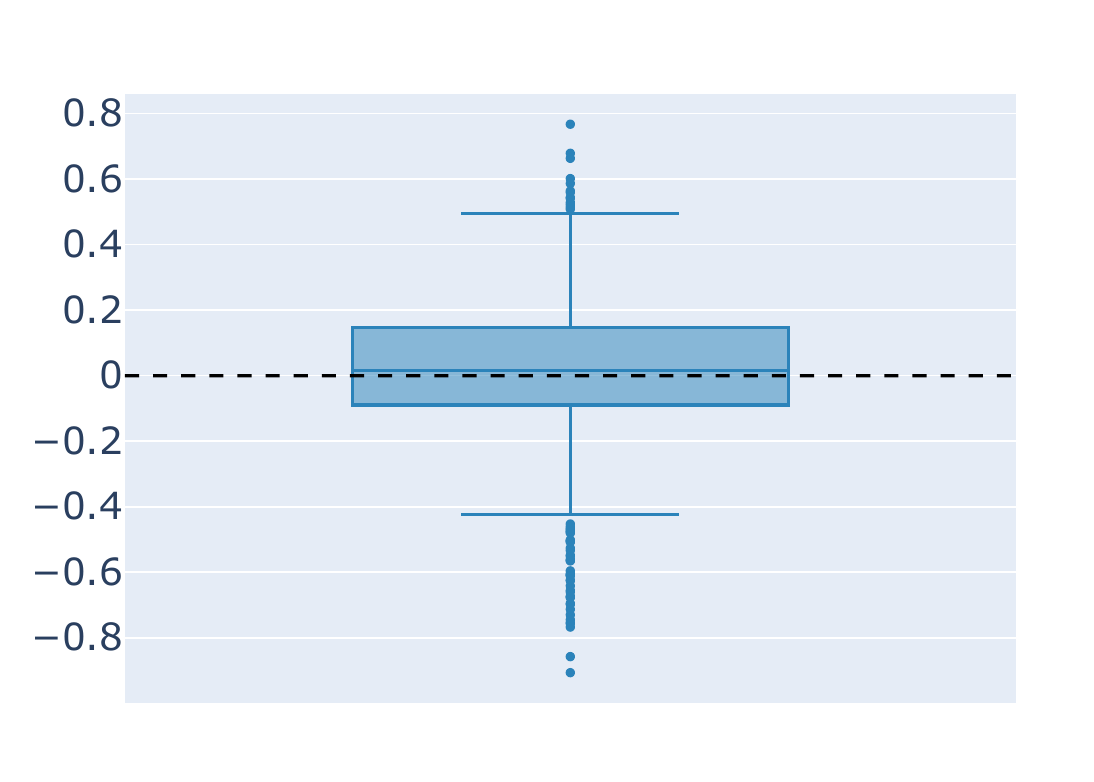} &
        \includegraphics[width=0.45\textwidth, valign=m]{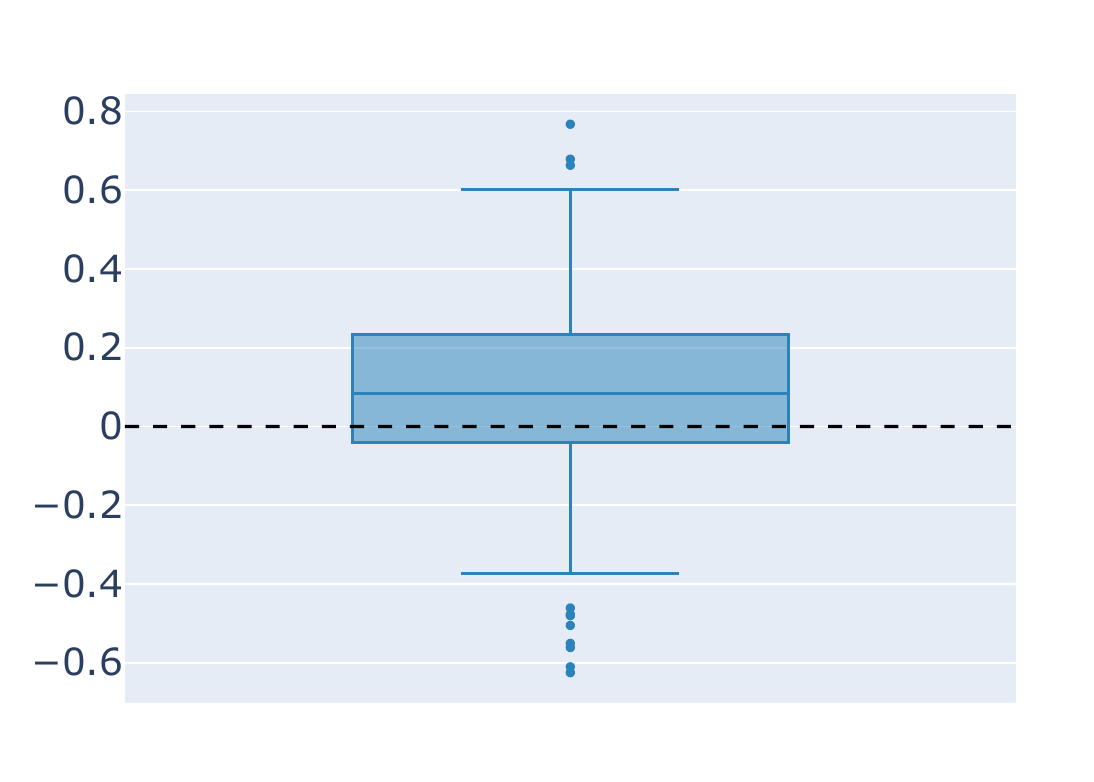} \\ [3ex]
        \rotatebox{90}{\revTwo{$\overline{\text{MCC}}$ (Unlabeled)}} &
        \includegraphics[width=0.45\textwidth, valign=m]{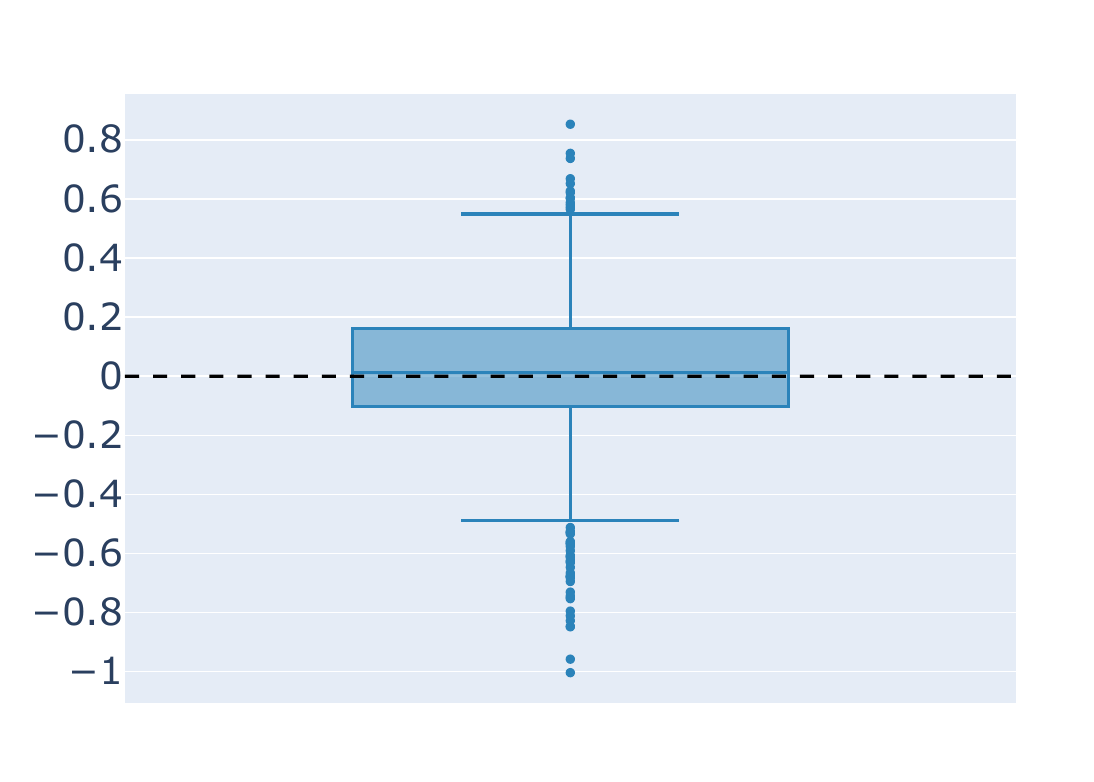} &
        \includegraphics[width=0.45\textwidth, valign=m]{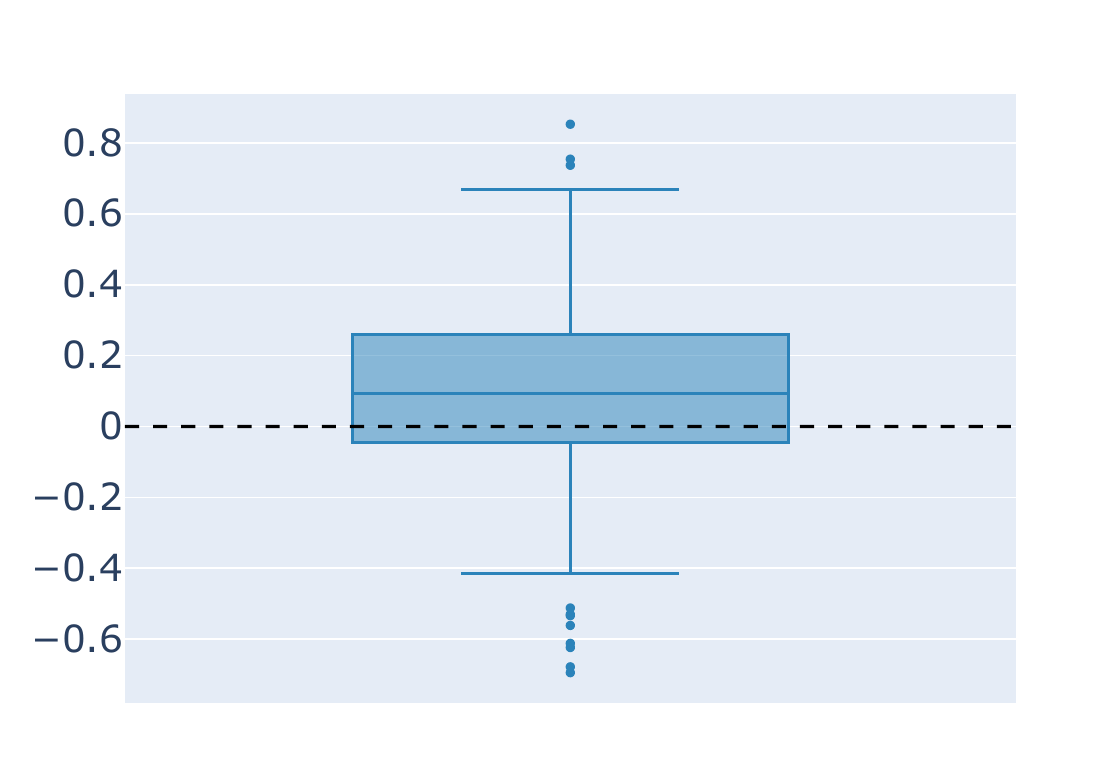}
    \end{tabular}
    \caption{\revTwo{Comparison of accuracy $\overline{\text{AC}}$ and
        Matthew's Correlation Coefficient $\overline{\text{MCC}}$ as
        described in \eqref{comparOCTH}. Rows distinguish between the
        entire dataset and unlabeled data, while columns contrast all
        instances versus those where both approaches finished within
        the time limit (TL).}}
    \label{fig:AC&MCC}
\end{figure}
Note that for both metrics, $\overline{\AC}$ and $\overline{\MCC}$, a
value greater than zero indicates that S$^2$OCT had a better result
than OCT-H and lower than zero\rev{, otherwise.}

\rev{Tables~\ref{tab:AC_results} and \ref{tab:MC_results} present the
  average and standard deviation of $\overline{\AC}$ and
  $\overline{\MCC}$, respectively.
Regarding the average values, note that S$^2$OCT performs better than
OCT-H in terms of both accuracy and MCC, especially for the instances
in which both approaches terminate within the time limit. With respect
to the standard deviation, we observe that the variability in both the
$\overline{\MCC}$ and $\overline{\AC}$ metrics decreases when
considering only the instances where both approaches terminated within
the time limit. This is expected, as once the method reaches the optimal
solution, the results tend to become more consistent in terms of
standard deviation across the successfully solved instances.}

\begin{table}[h!]
\centering
\caption{\rev{Average and Standard Deviation of $\overline{\AC}$; see
    Equation \eqref{comparOCTH}}}
\label{tab:AC_results}
\begin{tabular}{l c c c c}
\toprule
Instance & \multicolumn{2}{c}{All} & \multicolumn{2}{c}{Where both  approaches finish on time} \\
\cmidrule(lr){2-3} \cmidrule(lr){4-5}
 & Average & Std. Dev.   & Average  & Std. Dev.  \\
\midrule
Entire dataset &  0.041& 0.205 & 0.097  &0.165  \\
Unlabeled data & 0.056 & 0.222  &0.108 & 0.184 \\
\bottomrule
\end{tabular}
\end{table}
\begin{table}[h!]
\centering
\caption{\rev{Average and Standard Deviation of $\overline{\MCC}$\revTwo{; see}
    Equation \eqref{comparOCTH}}}
\label{tab:MC_results}
\begin{tabular}{l c c c c}
\toprule
Instance & \multicolumn{2}{c}{All} & \multicolumn{2}{c}{Where both  approaches finish on time} \\
\cmidrule(lr){2-3} \cmidrule(lr){4-5}
 & Average & Std. Dev.   & Average & Std. Dev. \\
\midrule
Entire dataset & 0.011 & 0.273   & 0.114   & 0.224 \\
Unlabeled data&  0.005  & 0.296  &0.106 & 0.253  \\
\bottomrule
\end{tabular}
\end{table}

\rev{We also provide the results in a boxplot.}
As can be seen in Figure~\ref{fig:AC&MCC}, the  $\overline{\AC}$
values  are greater than zero in  \SI{75}{\percent} of the results
(rows 1 and 2). Therefore, our proposed method takes advantage of
the additional information on the total number of cases for the
classes and has a better accuracy than OCT-H.
When comparing all instances (column 1), the negative outliers
indicate worse accuracy for S$^2$OCT than OCT-H in some cases. This
happens because in some instances our method does
not terminate within the time limit while OCT-H does. Since for those
instances that terminate within the time limit (column 2), we have few
outliers in accuracy (rows 1 and 2), we expect that the number of
instances with lower precision will decrease if we would increase the time
limit.
Figure~\ref{fig:AC&MCC} also shows that the $\overline{\MCC}$ values
are greater than zero in most cases (rows 3 and 4),  especially when
comparing only the instances that terminate in the time limit (column
2).   This means that our method has a better $\MCC$ than OCT-H. The
consequences of the results so far are that using the unlabeled points
as well as the cardinality
constraint allows to correctly classify the points with higher
accuracy and better $\MCC$ than with the optimal decision tree
approach OCT-H. Moreover, further numerical tests revealed that if
the percentage of labeled points is decreased, OCT-H tends to decrease
in accuracy and $\MCC$, while the deterioration for S$^2$OCT is much
less pronounced. This is especially relevant as in typical social
surveys the sample proportion is seldomly over \SI{1}{\percent} of the
population.


\section{Conclusion}
\label{sec:conclusion}

In many classification problems, acquiring labels for the entire
population of interest can be expensive.
Fortunately, external sources oftentimes can provide aggregated
information on how many points are in each class.
For this context, we proposed an MILP model for semi-supervised
multivariate OCTs that considers the setting of labeled and unlabeled
data points as well as additional aggregated information for the
unlabeled data for a binary classification.

Under the condition of simple random sampling, our proposed approach
has a slightly better accuracy and a better $\MCC$ than the
conventional optimal classification tree.
In many applications, however, the available data is coming from
non-probability samples, where the data collection mechanism is
largely unknown. Assuming simple random sampling in this setting is at
least optimistic.
Consequently, there is the risk of obtaining biased samples.
Our numerical results show that our model has better accuracy, $\MCC$,
and precision than the existing approach from the literature, even
with a small number of labeled points and biased samples. As expected,
the drawback of introducing the cardinality constraint is that we get
larger computational costs.
\rev{Consequently, one of the most important topics of future research
  is to develop problem-tailored solution strategies that lead to a
  better scalability of the presented MILP-based approach.
  Potential strategies might include, among others, decomposition
  strategies or primal heuristics to speed up the solution process.}

For \rev{further} future work, we will adapt our approach to a
multiclass OCT.
\rev{This could be achieved by adapting the model so that the leaf
  nodes consider more than two classes, treating each (point,
  class)-pair as a binary variable, which would inevitably increase the
  computational burden.}
Furthermore, more research is needed to further reduce the
computational burden.


\section*{Acknowledgements}

The authors thank the DFG for their support within RTG~2126
\enquote{Algorithmic Optimization}.


\printbibliography

\appendix
\section{Further Numerical Results}
\label{sec:furth-numer-results}

Besides the measures of accuracy and $\MCC$, we compare two further
measures that, depending on the application, can be highly relevant.
The first metric is precision ($\PR$).
It measures the proportion of correctly classified points among all
positively classified points and is thus defined as
\begin{equation}
  \label{prec}
  \PR \define \frac{\TP}{\TP + \FP} \in [0,1].
\end{equation}
This quantity is important in some application such as for fraud detection
systems, where identifying legitimate transactions as fraudulent is
better than identify fraudulent transactions as legitimate. Moreover,
precision can be higher when there are more positives in the dataset.

Second, we consider recall ($\RE$), which quantifies the proportion of
positive instances that are correctly classified as positive.
It is formally given by
\begin{equation}
  \label{recall}
  \RE \define \frac{\TP}{\TP+\FN} \in [0,1].
\end{equation}
This quantity is important in some applications such as in cancer
diagnosis, where evaluating recall is relevant as it is more
significant to identify potential cancer cases than to do
not. Different from precision, recall can be higher when there are
more negatives in the dataset.

As accuracy and $\MCC$, the main question is how much better
precision and recall of S$^2$OCT are compared to the one of the
OCT-H.
Hence, we compute the difference of the precision and recall according
to
\begin{equation}
  \label{comparOCTH2}
  \overline{\PR} \define \PR_{\SOCT}-\PR_{\OCTH}
  \quad
  \overline{\RE} \define \RE_{\SOCT}-\RE_{\OCTH},
\end{equation}
where $\PR_{\OCTH}$ and $\PR_{\SOCT}$ are computed as in~\eqref{prec}
for the OCT-H and S$^2$OCT, respectively. In the same way,
$\RE_{\OCTH}$ and $\RE_{\SOCT}$ are computed as in~\eqref{recall} for
the OCT-H and S$^2$OCT.
Note that as in Section \ref{sec:numerical-results}, for both
$\overline{\PR}$ and $\overline{\RE}$, a value greater than zero
indicates that S$^2$OCT has a better result than OCT-H and lower than
zero indicates that   S$^2$OCT has a worse result than OCT-H.
As can be seen in Figure~\ref{fig:PR&RE}, the $\overline{\PR}$
values  are greater than zero in more than  \SI{75}{\percent} of the
results (rows 1 and 2).  This means that  S$^2$OCT classifies the
points  with higher precision than OCT-H. Hence, OCT-H has more
false-positive results. The negative outliers most likely are due to
the same reason as  those for the respective $\overline{\AC}$ and
$\overline{\MCC}$ values.

On the other hand, Figure~\ref{fig:PR&RE} also show that
$\overline{\RE}$ is, in general, lower than $0$. This means that OCT-H
has better recall than our method. The results of precision and recall can
be justified by the fact that the biased sample is more likely to have
labeled data in class $\mathcal{A}$ and having no information about
the unlabeled data, the OCT-H ends up classifying points on the
positive side.

\begin{figure}[t]
    \centering
   \begin{tabular}{@{}>{\centering\arraybackslash}m{15pt}@{\hspace{1pt}}c@{\hspace{5pt}}c@{}}
        & \revTwo{All instances}  & \revTwo{Both terminate within TL} \\
        \rotatebox{90}{\revTwo{$\overline{\text{PR}}$ (All)}} &
        \includegraphics[width=0.45\textwidth, valign=m]{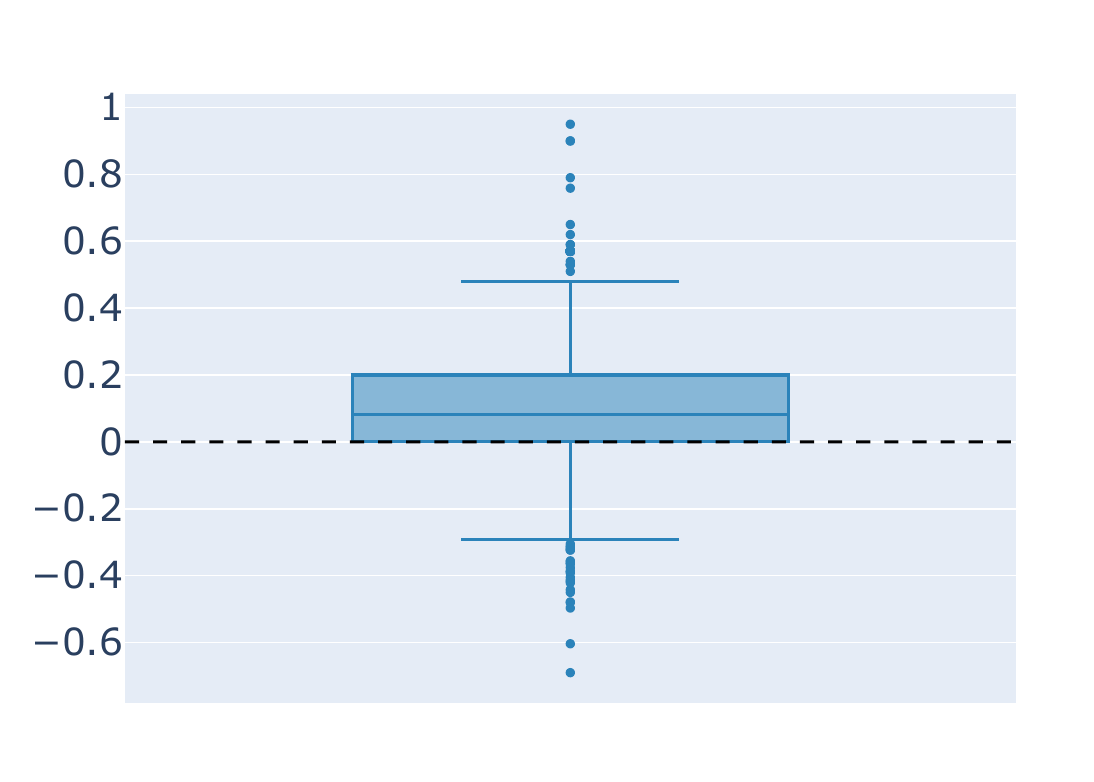} &
        \includegraphics[width=0.45\textwidth, valign=m]{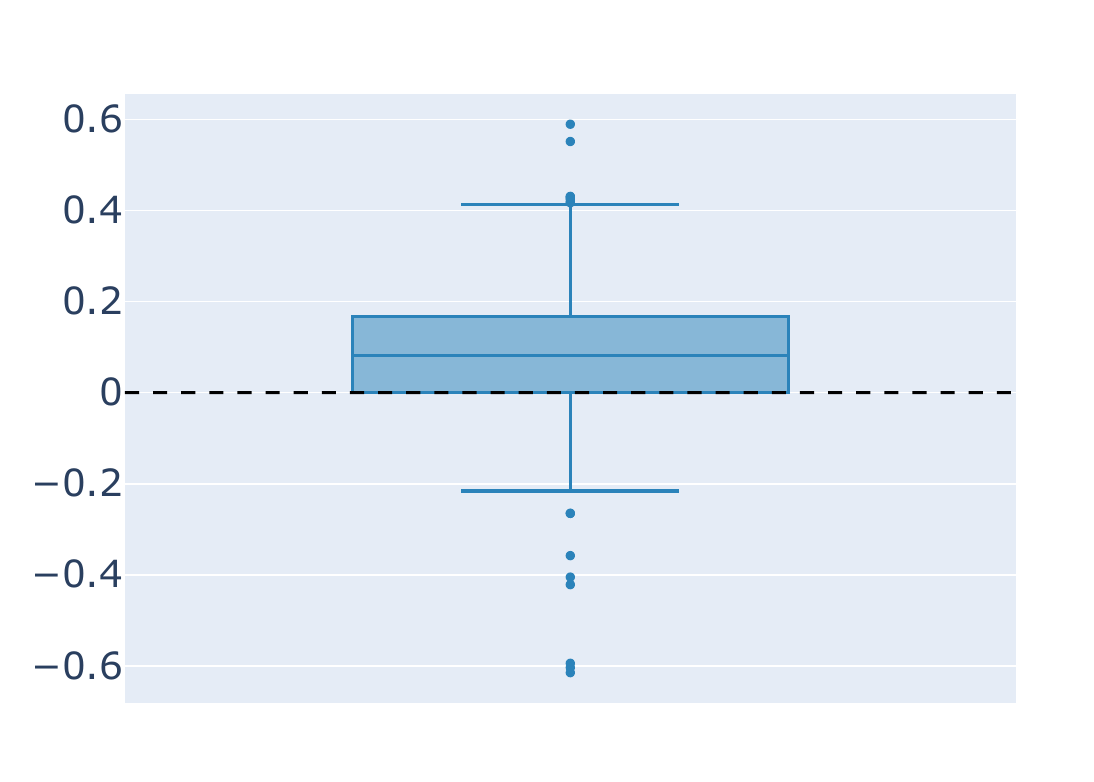} \\ [3ex]

        \rotatebox{90}{\revTwo{$\overline{\text{PR}}$ (Unlabeled)}} &
        \includegraphics[width=0.45\textwidth, valign=m]{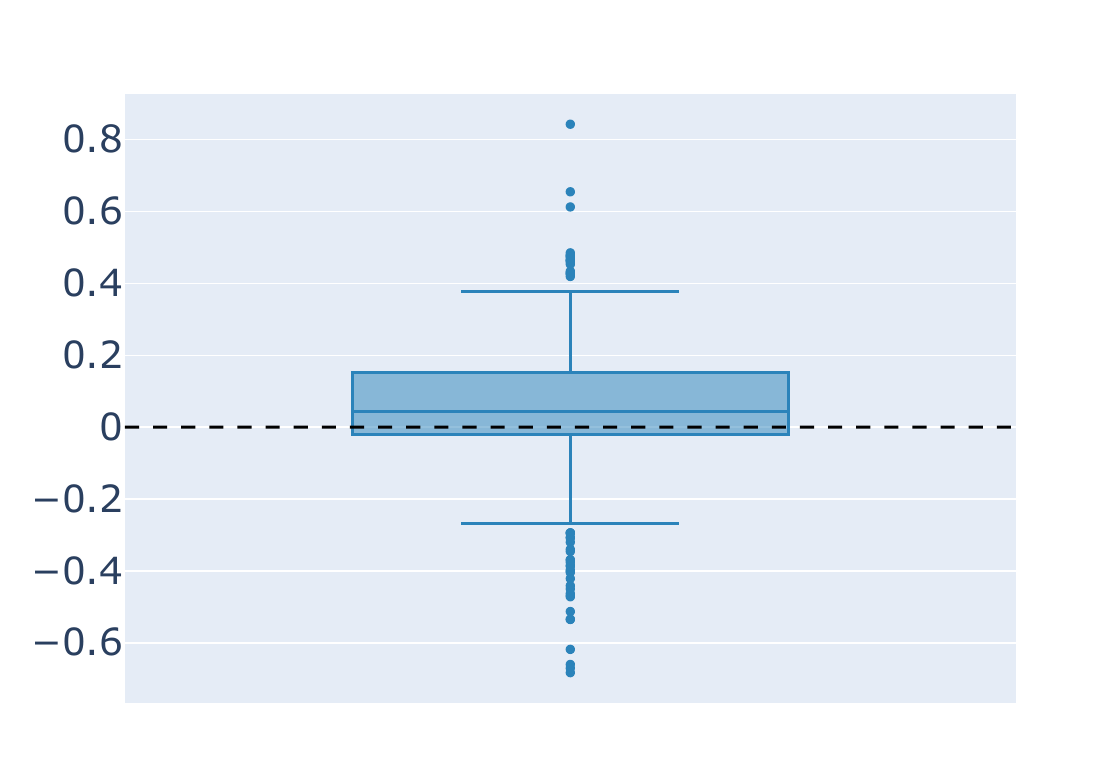} &
        \includegraphics[width=0.45\textwidth, valign=m]{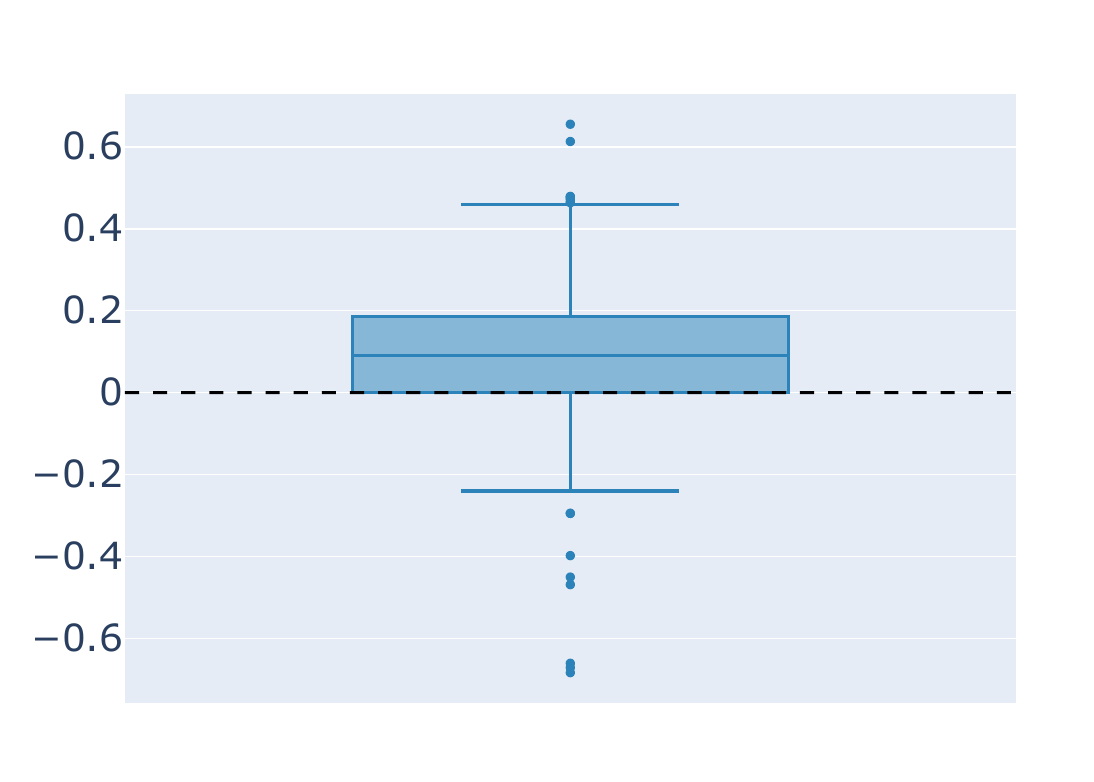} \\ [3ex]

        \rotatebox{90}{\revTwo{$\overline{\text{RE}}$ (All)}} &
        \includegraphics[width=0.45\textwidth, valign=m]{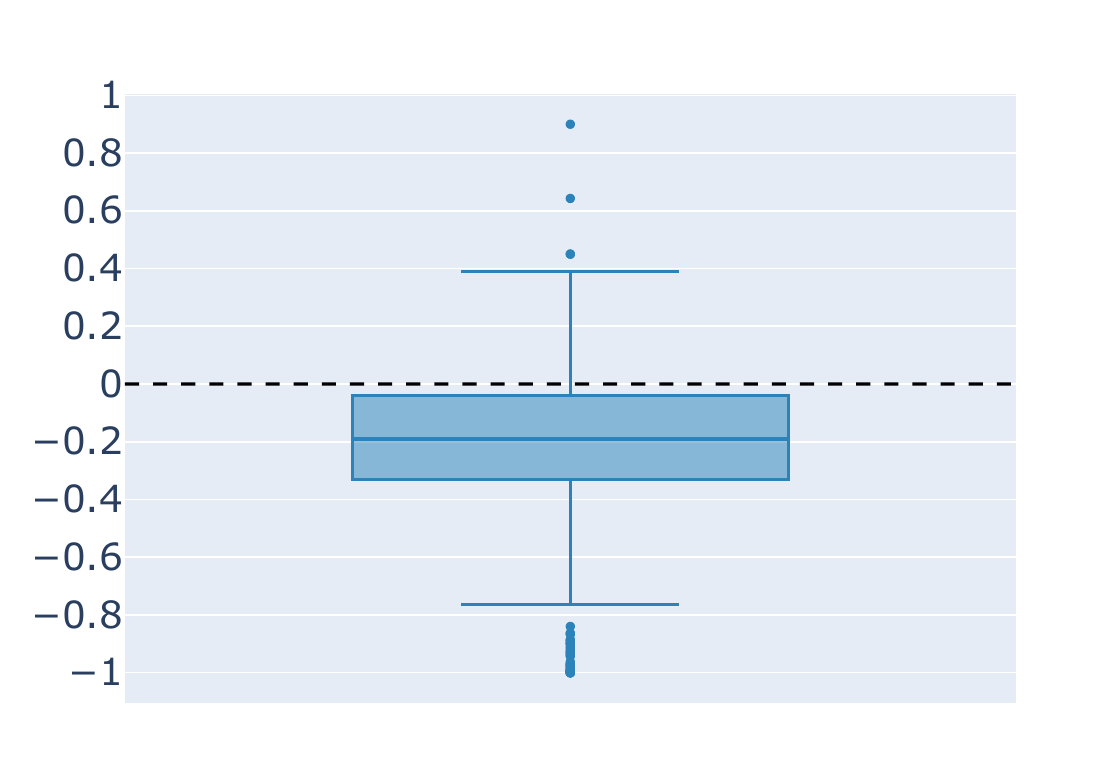} &
        \includegraphics[width=0.45\textwidth, valign=m]{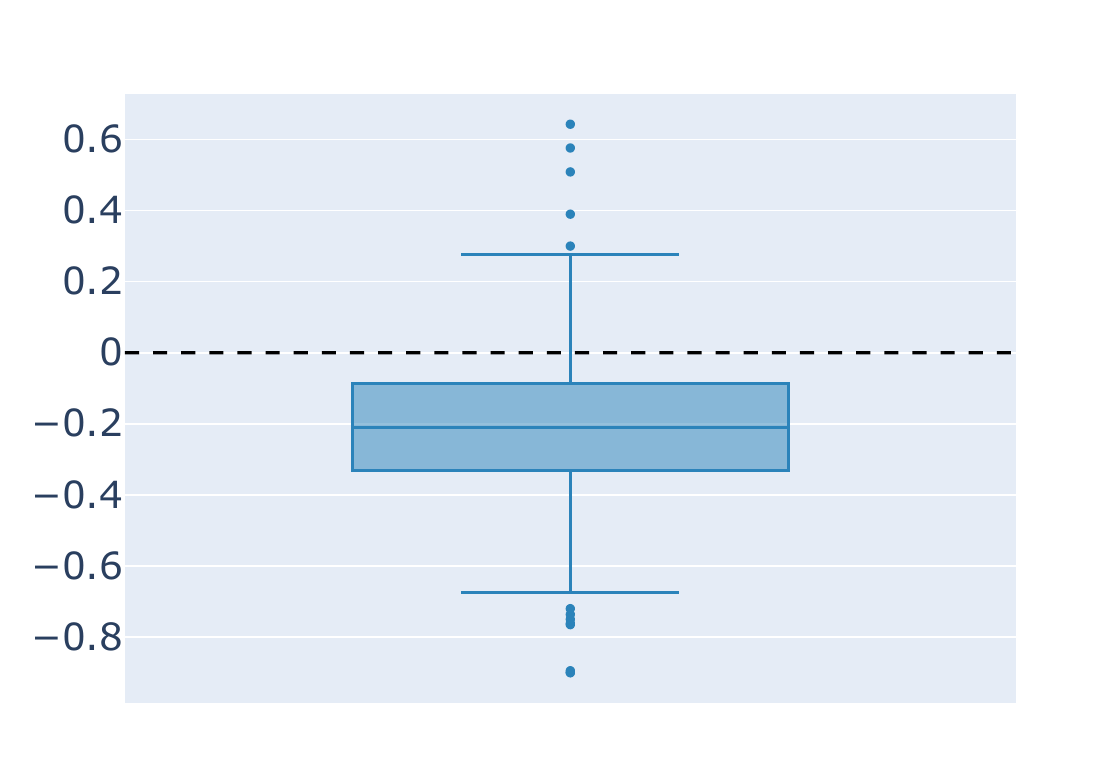} \\ [3ex]

        \rotatebox{90}{\revTwo{$\overline{\text{RE}}$ (Unlabeled)}} &
        \includegraphics[width=0.45\textwidth, valign=m]{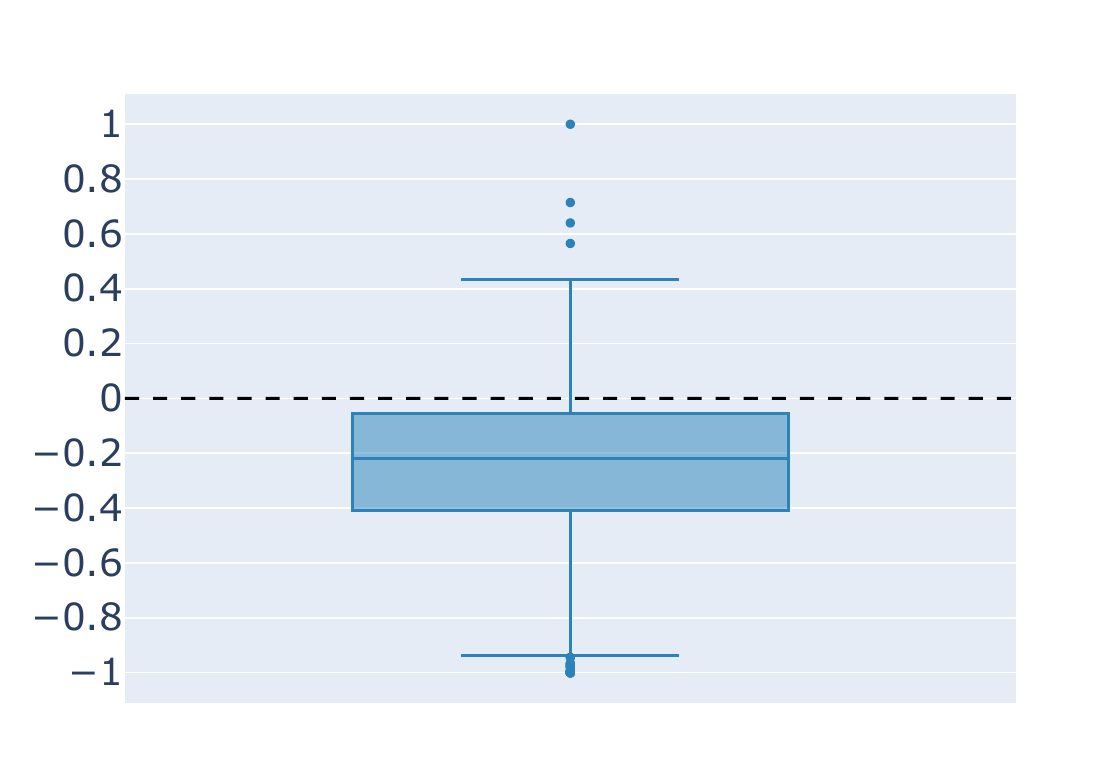} &
        \includegraphics[width=0.45\textwidth, valign=m]{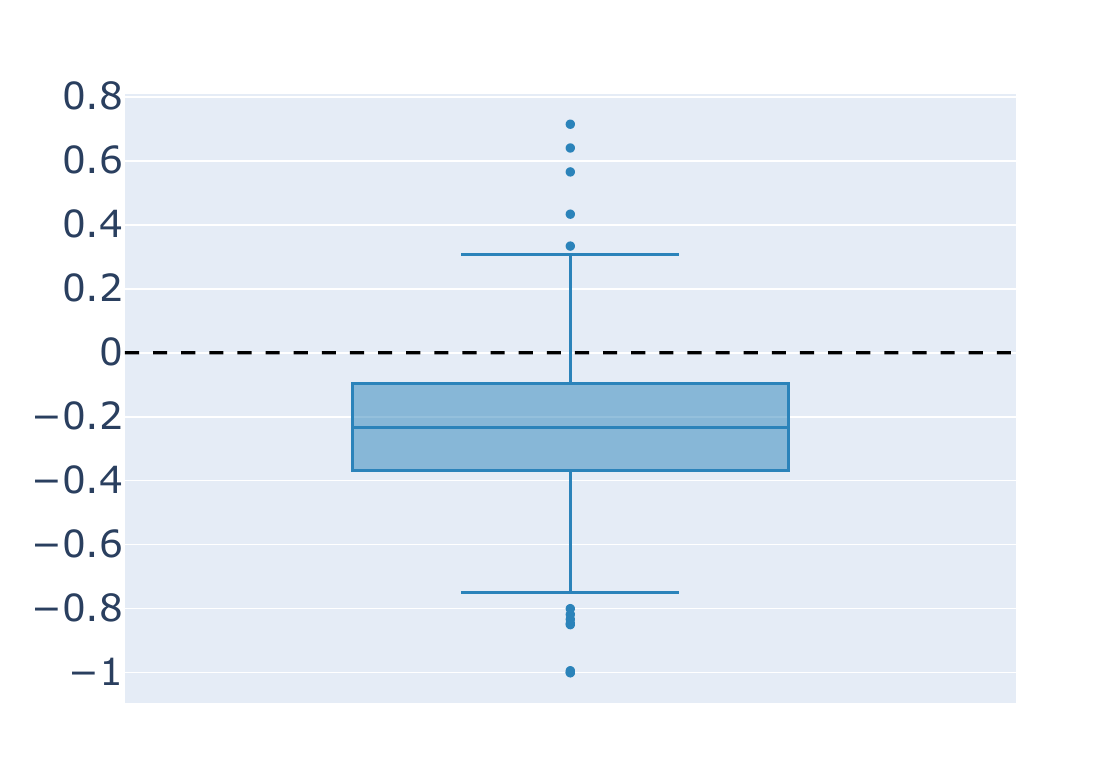}
    \end{tabular}

    \caption{\revTwo{Comparison of precision $\overline{\text{PR}}$ and recall $\overline{\text{RE}}$ as described in \eqref{comparOCTH2}. Rows distinguish between the entire data set and unlabeled data, while columns contrast all instances versus those where both approaches finished within the time limit (TL).}}
    \label{fig:PR&RE}
\end{figure}


\section{Numerical Results for Simple random samples}
\label{sec:num-results-simple-sample}

Our computational study in Section~\ref{sec:numerical-results}
focuses on the analysis of non-representative biased samples.
The typical baseline scenario for evaluating the performance of
estimators is to apply them on simple random samples. Therefore, to
complement our numerical results, we also present the results under
simple random sampling. In a simple random sampling, each
unit in the data set has the same probability $\pi_i = n/N$ to be
included in the sample of labeled data of size $n$.
The instances are the same as described in
Section~\ref{subsection-test-sets}.
The computational setup follows the description in
Section~\ref{subsection-comp-setup}. As before,  the used evaluation
criteria are $\overline{\AC}$ and $\overline{\MCC}$ as
in~\eqref{comparOCTH} and $\overline{\PR}$ and $\overline{\RE}$ as
in~\eqref{comparOCTH2}.

\begin{figure}[t]
    \centering
   \begin{tabular}{@{}>{\centering\arraybackslash}m{15pt}@{\hspace{1pt}}c@{\hspace{5pt}}c@{}}
        & \revTwo{All instances}  & \revTwo{Both terminate within TL} \\
        \rotatebox{90}{\revTwo{$\overline{\text{AC}}$ (All)}} &
        \includegraphics[width=0.45\textwidth, valign=m]{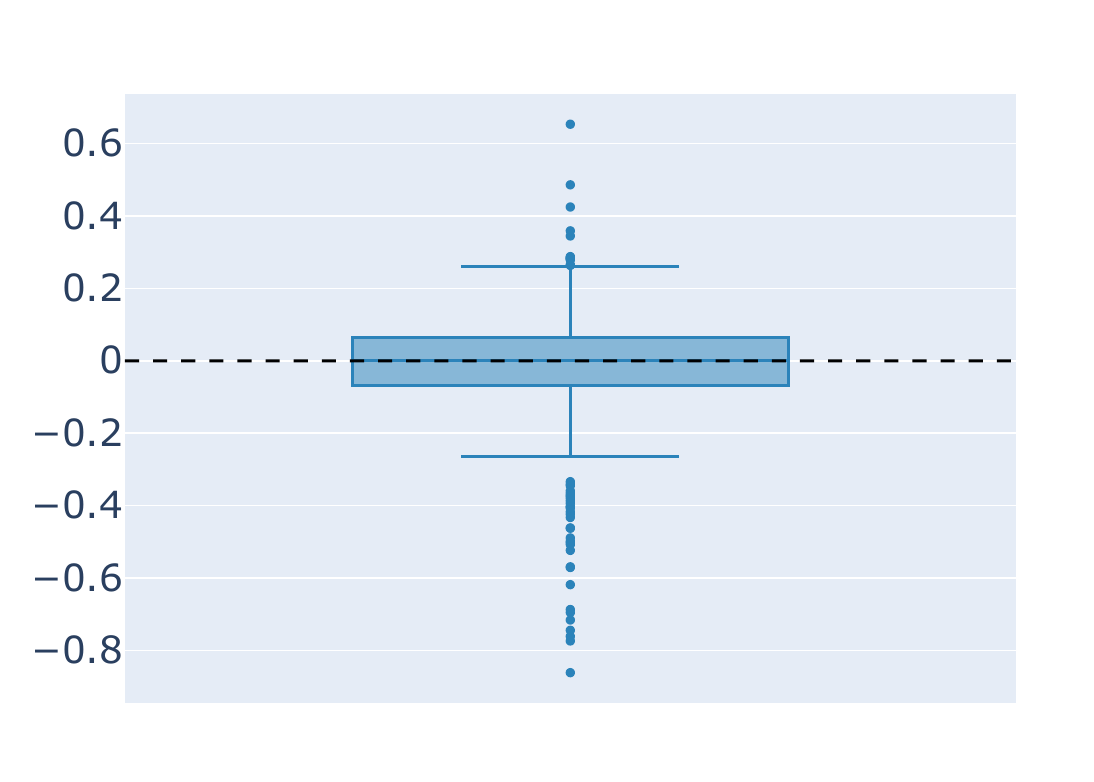} &
        \includegraphics[width=0.45\textwidth, valign=m]{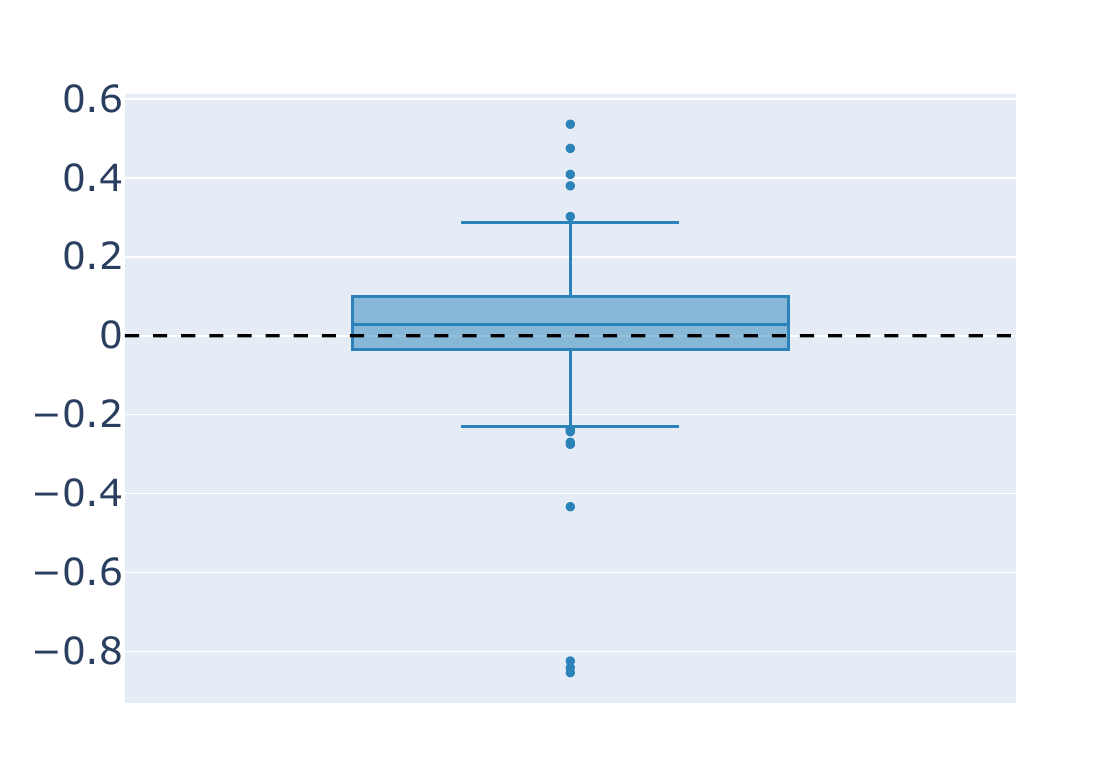} \\ [3ex]

        \rotatebox{90}{\revTwo{$\overline{\text{AC}}$ (Unlabeled)}} &
        \includegraphics[width=0.45\textwidth, valign=m]{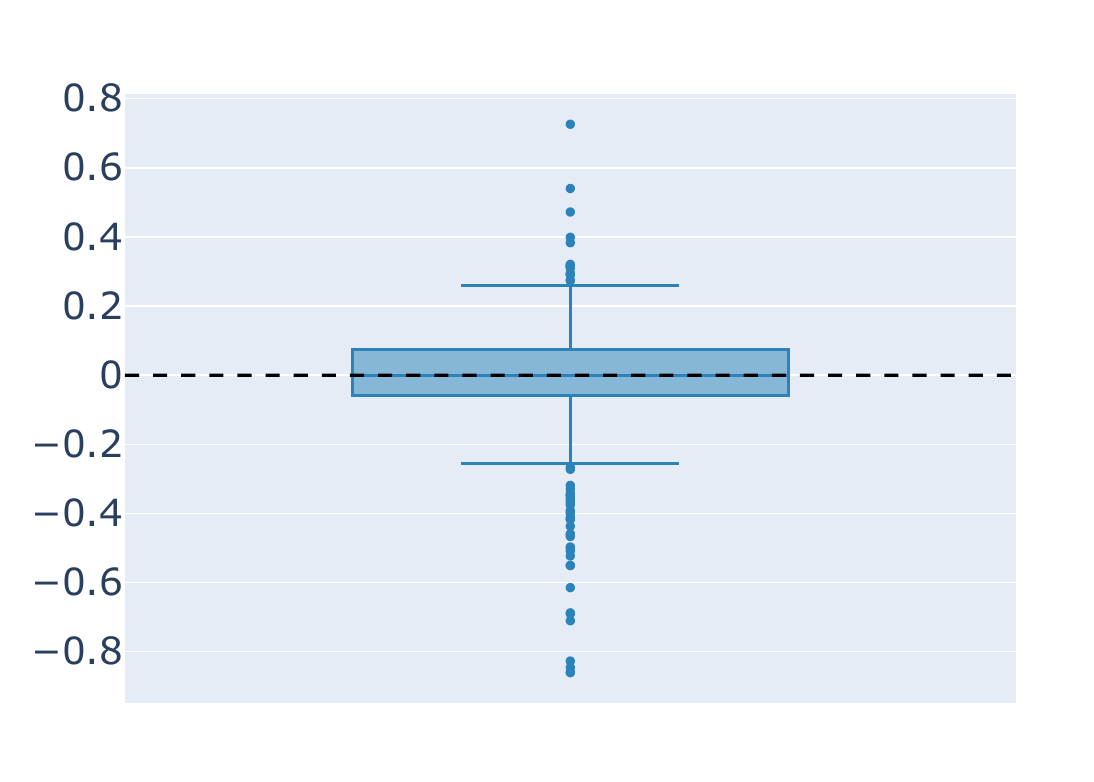} &
        \includegraphics[width=0.45\textwidth, valign=m]{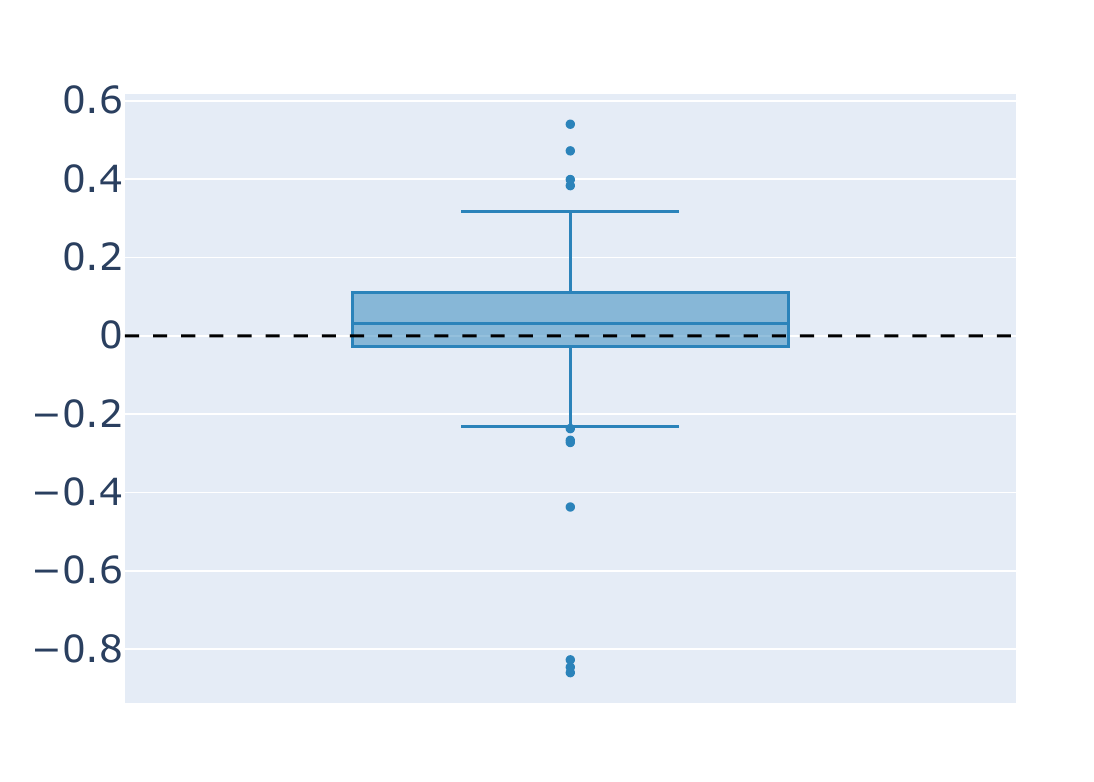} \\ [3ex]

        \rotatebox{90}{\revTwo{$\overline{\text{MCC}}$ (All)}} &
        \includegraphics[width=0.45\textwidth, valign=m]{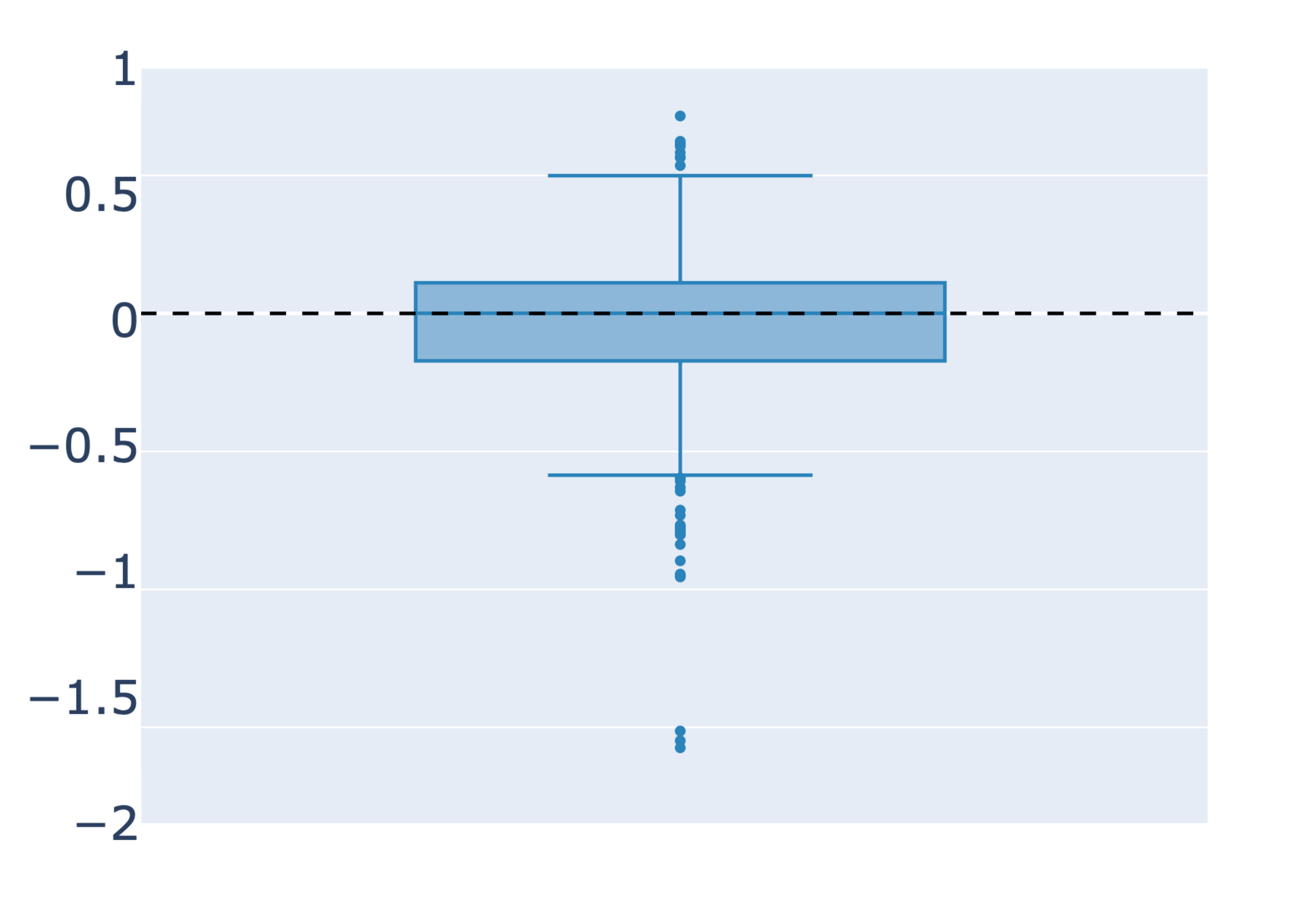} &
        \includegraphics[width=0.45\textwidth, valign=m]{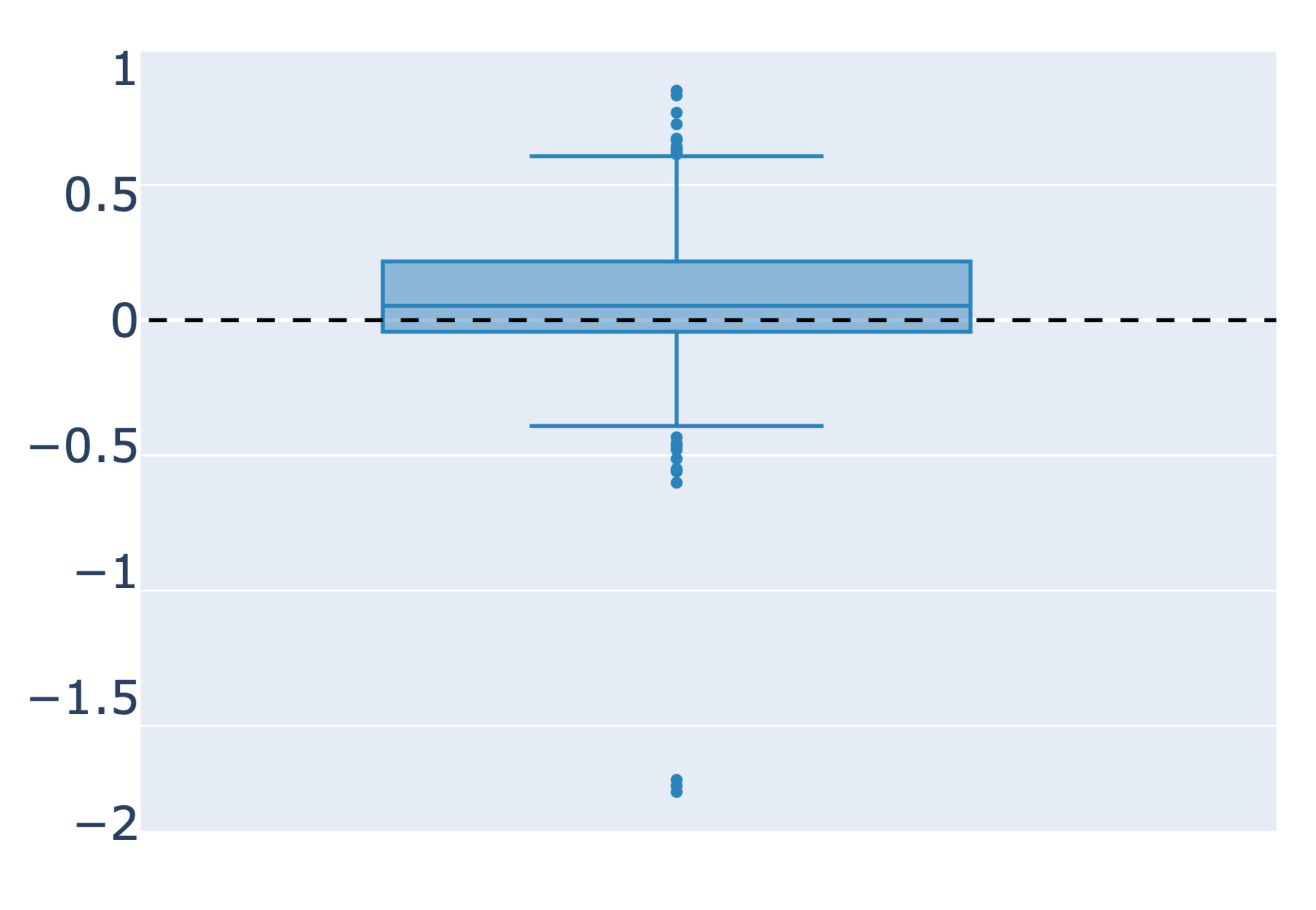} \\ [3ex]

        \rotatebox{90}{\revTwo{$\overline{\text{MCC}}$ (Unlabeled)}} &
        \includegraphics[width=0.45\textwidth, valign=m]{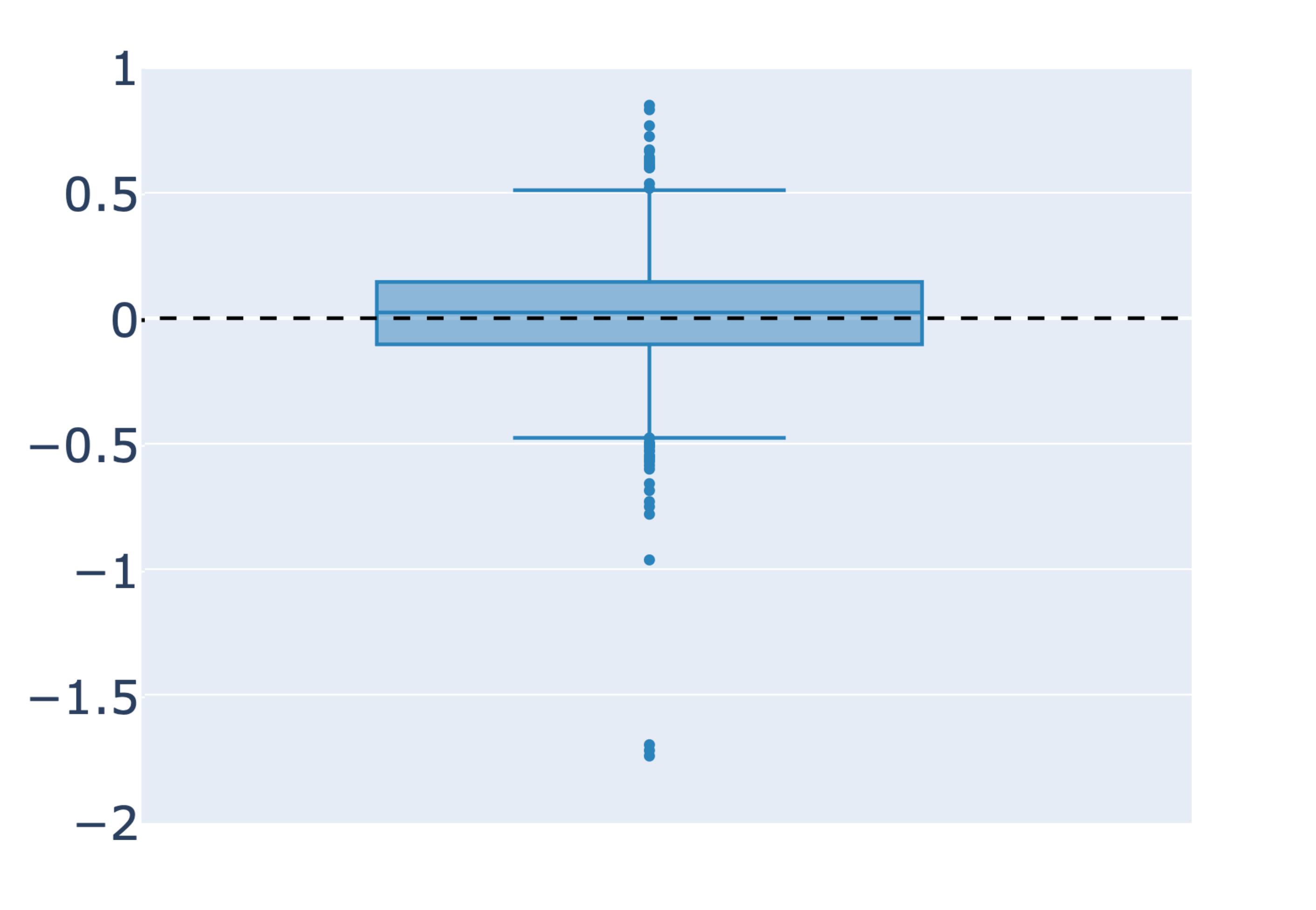} &
        \includegraphics[width=0.45\textwidth, valign=m]{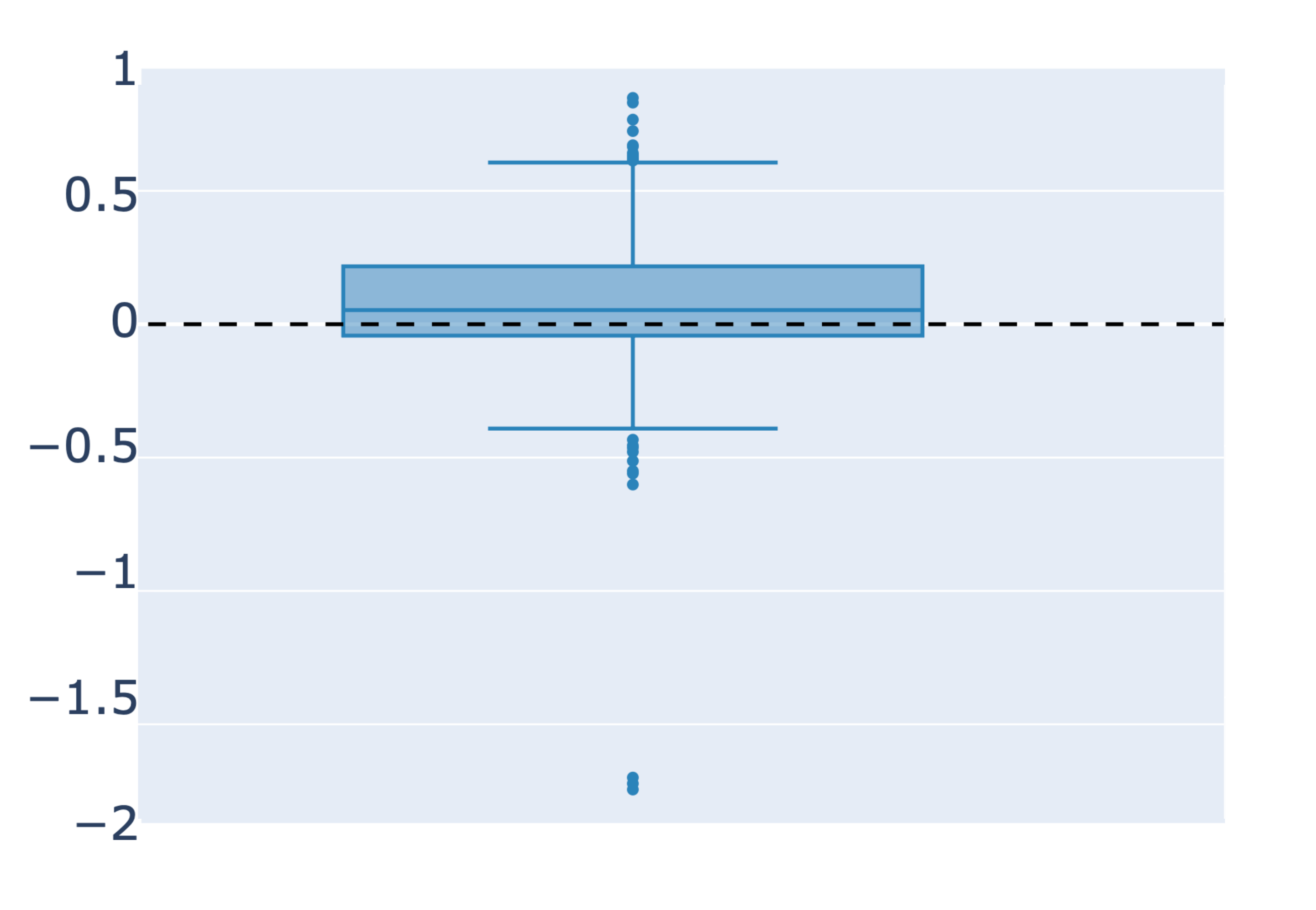}
    \end{tabular}

    \caption{\revTwo{Comparison of accuracy $\overline{\text{AC}}$ and Matthew's Correlation Coefficient $\overline{\text{MCC}}$ for the simple sample approach as described in \eqref{comparOCTH}. Rows distinguish between the entire data set and unlabeled data, while columns contrast all instances versus those where both approaches finished within the time limit (TL).}}
    \label{fig:AC&MCCsimplesample}
\end{figure}
Figure~\ref{fig:AC&MCCsimplesample} shows that for all the instances
(column 1), $\overline{\AC}$ (rows 1 and 2) and $\overline{\MCC}$
(rows 3 and 4) have value greater than 0 and lower than 0 in
\SI{50}{\percent} of the cases. This means both approaches have similar
accuracy and $\MCC$. However,  when comparing only those instances that
terminate within the time limit (column 2), it can be seen that S$^2$OCT
has slightly better accuracy and $\MCC$, but not as much as for biased
samples; see Section~\ref{sec:accuracyandMCC}. This is expected
because the sample is not biased.  Consequently, the cardinality
constraint, which aims to balance the class distribution, does not
introduce additional meaningful information to the problem. As can be
seen in Figure~\ref{fig:PR&RESimpleSample}, precision and recall are
similar for both approaches. Therefore, for the simple random samples,
our approach has almost the same results as OCT-H, with slight
improvements in accuracy and $\MCC$.
\begin{figure}[t]
    \centering
   \begin{tabular}{@{}>{\centering\arraybackslash}m{15pt}@{\hspace{1pt}}c@{\hspace{5pt}}c@{}}
        & \revTwo{All instances}  & \revTwo{Both terminate within TL} \\
        \rotatebox{90}{\revTwo{$\overline{\text{PR}}$ (All)}} &
        \includegraphics[width=0.45\textwidth, valign=m]{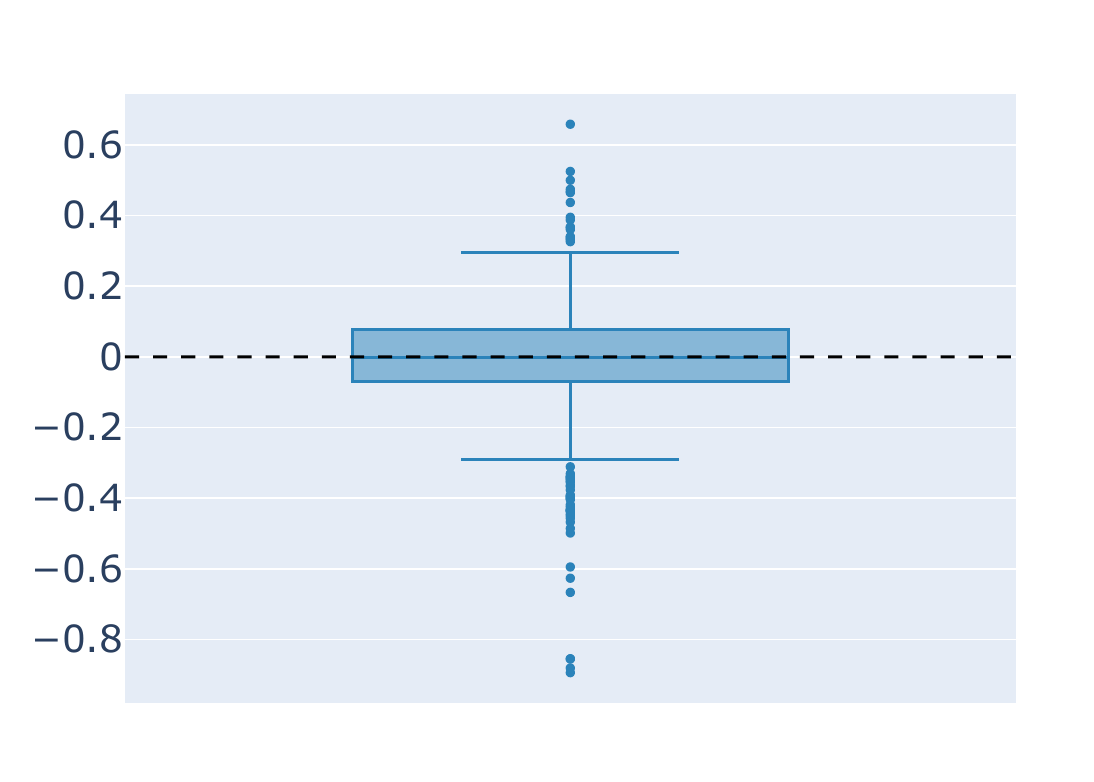} &
        \includegraphics[width=0.45\textwidth, valign=m]{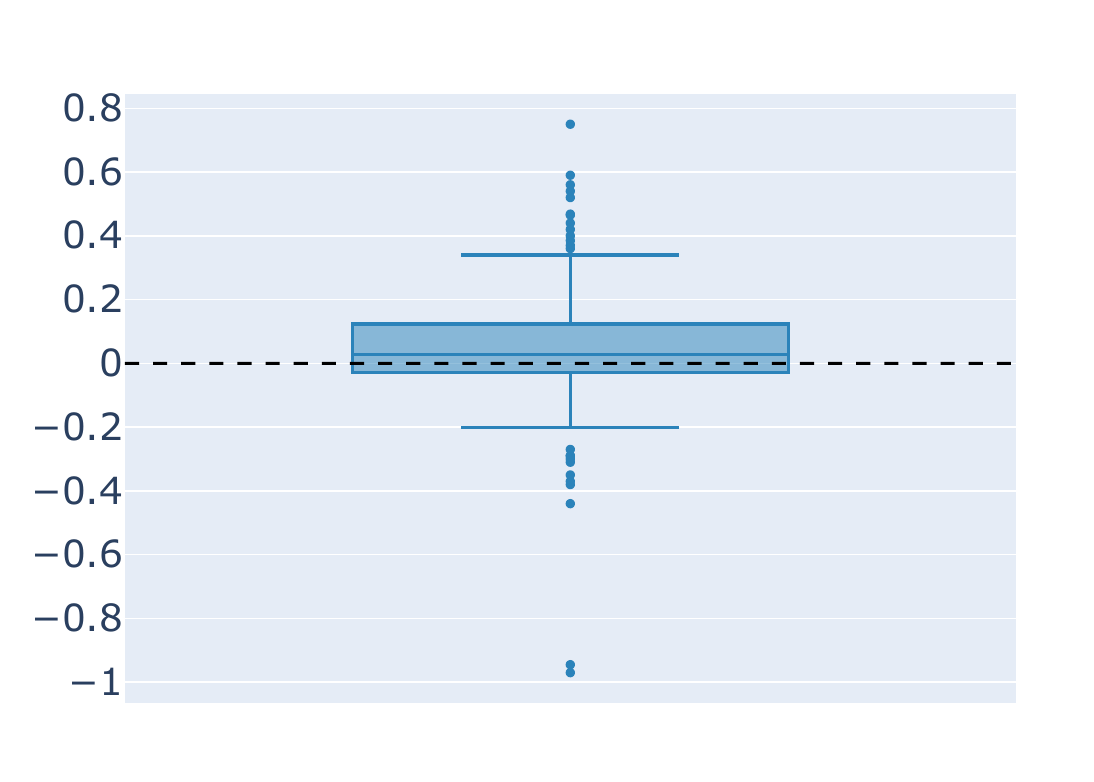} \\ [3ex]

        \rotatebox{90}{\revTwo{$\overline{\text{PR}}$ (Unlabeled)}} &
        \includegraphics[width=0.45\textwidth, valign=m]{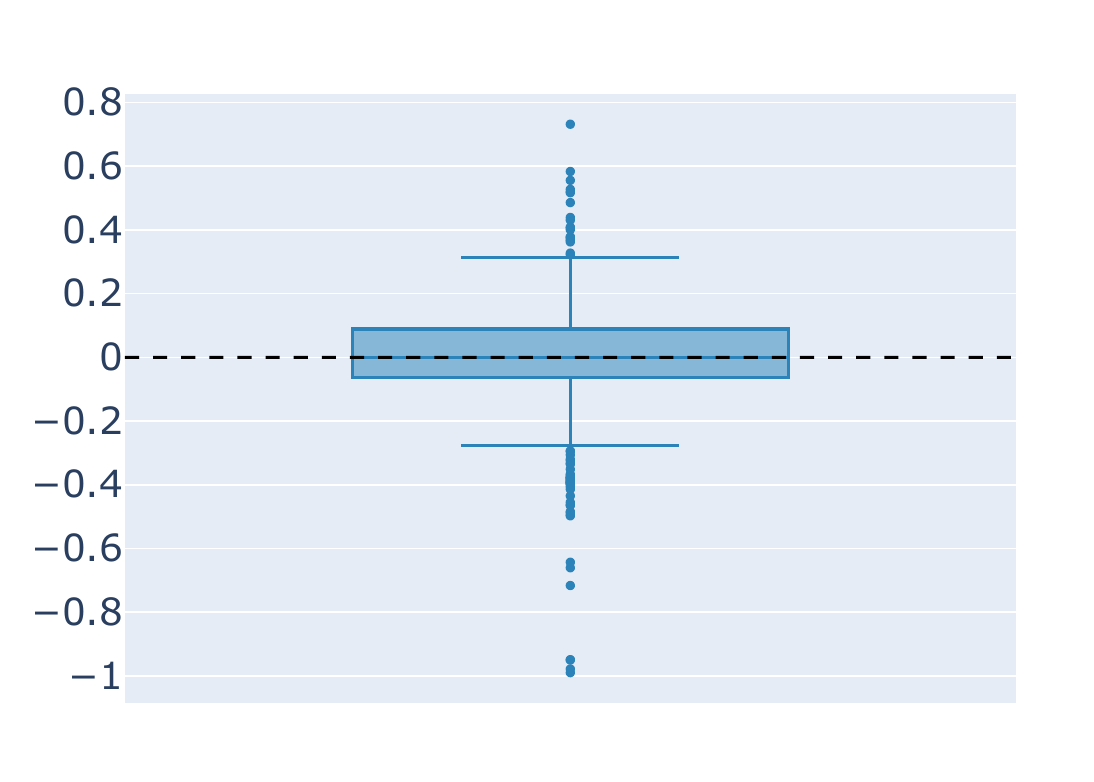} &
        \includegraphics[width=0.45\textwidth, valign=m]{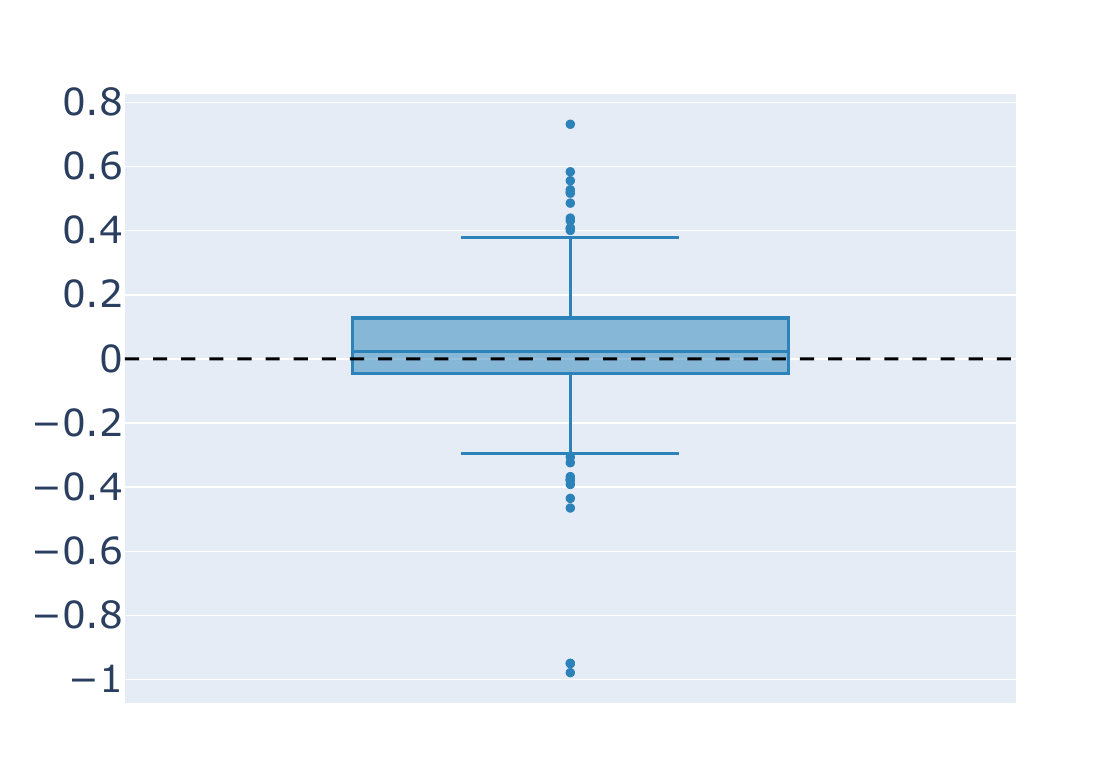} \\ [3ex]

        \rotatebox{90}{\revTwo{$\overline{\text{RE}}$ (All)}} &
        \includegraphics[width=0.45\textwidth, valign=m]{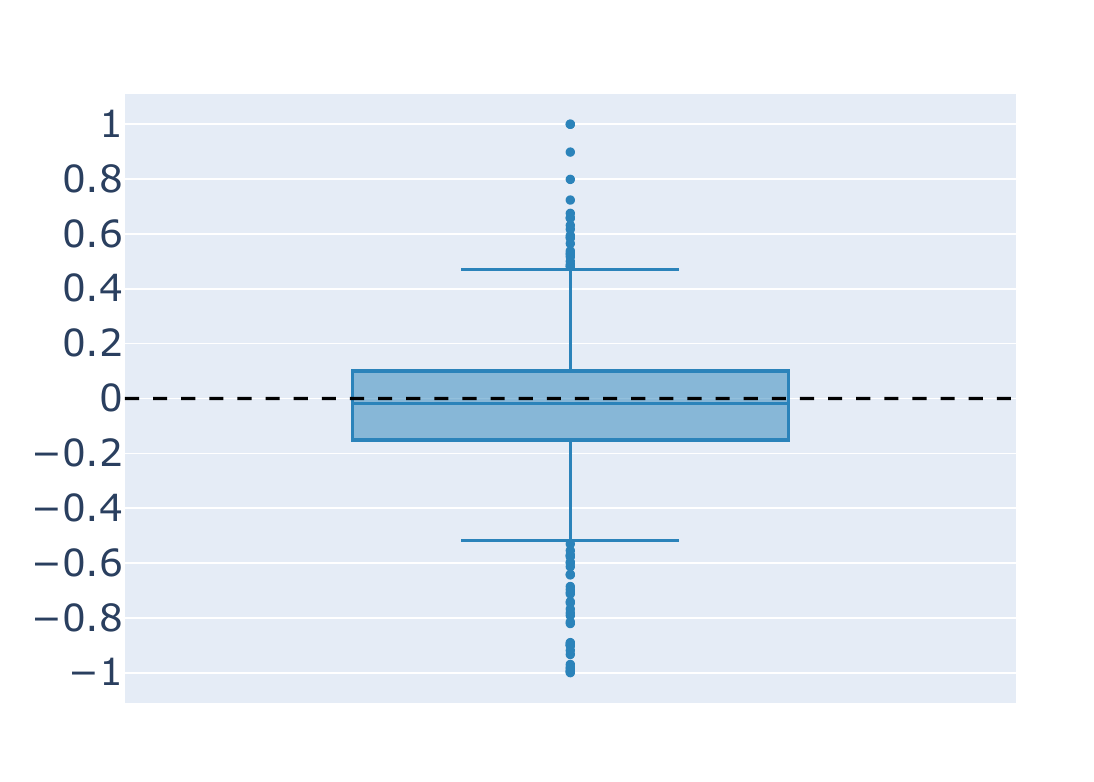} &
        \includegraphics[width=0.45\textwidth, valign=m]{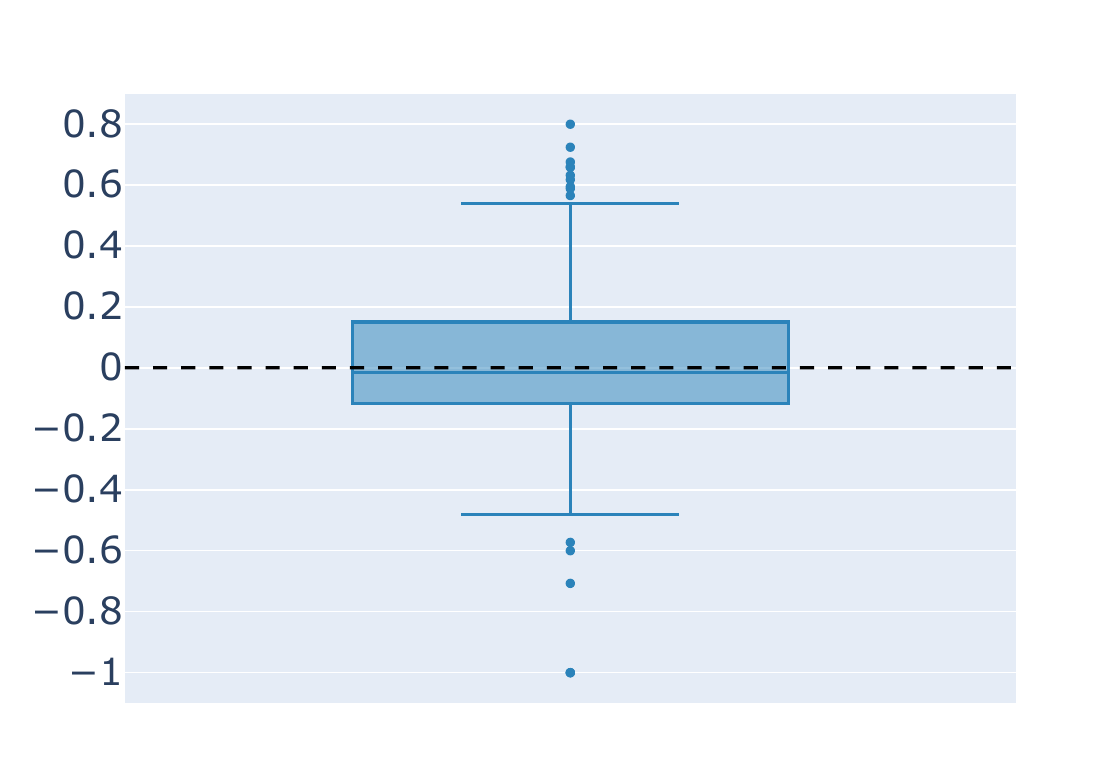} \\ [3ex]

        \rotatebox{90}{\revTwo{$\overline{\text{RE}}$ (Unlabeled)}} &
        \includegraphics[width=0.45\textwidth, valign=m]{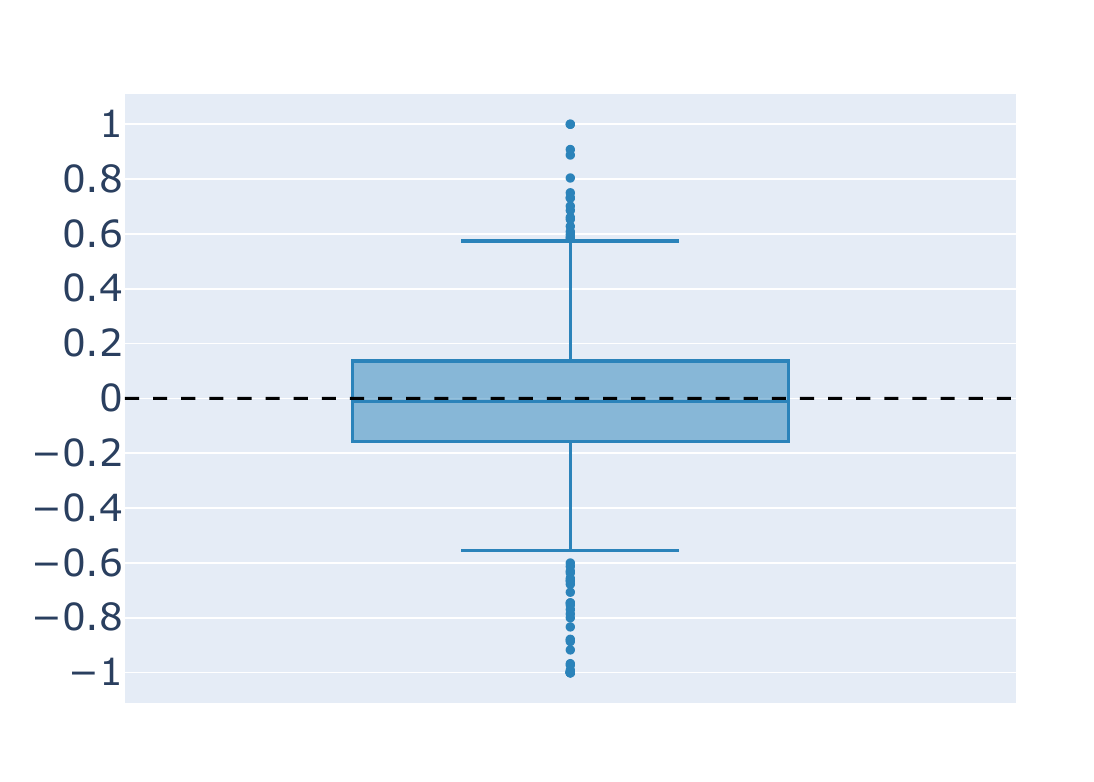} &
        \includegraphics[width=0.45\textwidth, valign=m]{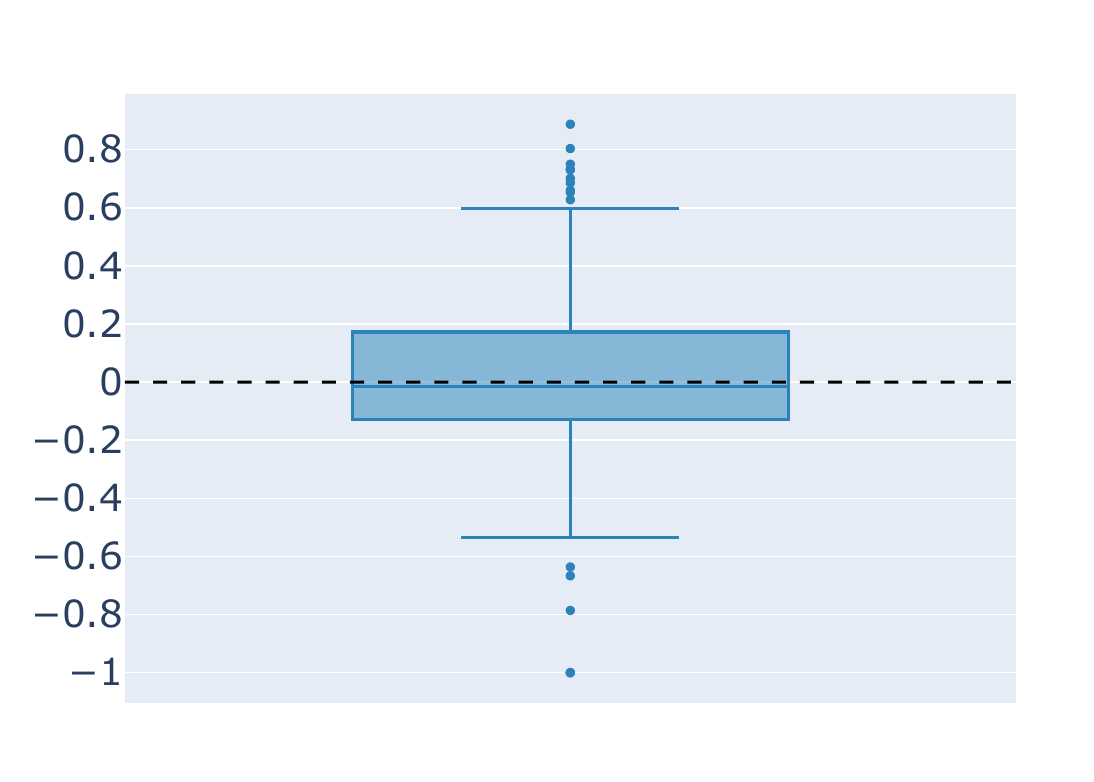}
    \end{tabular}

    \caption{\revTwo{Comparison of precision $\overline{\text{PR}}$ and recall $\overline{\text{RE}}$ for the simple sample approach as described in \eqref{comparOCTH2}. Rows distinguish between the entire data set and unlabeled data, while columns contrast all instances versus those where both approaches finished within the time limit (TL).}}
    \label{fig:PR&RESimpleSample}
\end{figure}


\end{document}